\numberwithin{equation}{section}
\newtheorem{theorem}{Theorem}[section]
\newtheorem{lemma}{Lemma}[section]
\newtheorem{corollary}{Corollary}[section]
\newtheorem{proposition}{Proposition}[section]
\newtheorem{remark}{Remark}[section]
\newtheorem{definition}{Definition}[section]
\title[On Lane-Emden systems with singular nonlinearities]
 {On Lane-Emden systems with singular nonlinearities and applications to MEMS}
\author[J.M.\ do \'O]{Jo\~ao Marcos do \'O}
\author[R.G.\ Clemente]{Rodrigo G.\ Clemente}
\address[J.M. do \'O]{Department of Mathematics,
Federal University of Para\'{\i}ba
\newline\indent
58051-900, Jo\~ao Pessoa-PB, Brazil}
\email{\href{mailto:jmbo@pq.cnpq.br}{jmbo@pq.cnpq.br}}
\address[R.\ Clemente]{Department of Mathematics, 
 Rural Federal University of Pernambuco
\newline\indent 
52171-900, Recife, Pernambuco, Brazil}
\email{\href{mailto:rodrigo.clemente@ufrpe.br}{rodrigo.clemente@ufrpe.br}}
\thanks{Research partially supported by the National Institute of Science and Technology of Mathematics INCT-Mat, CAPES and CNPq.}
\subjclass[2000]{35J47, 35J75, 35B65}
\keywords{Nonlinear PDE of elliptic type, singular nonlinearity, elliptic systems, semi-stable solution, extremal solution, regularity of extremal solutions.}
\begin{document}

\begin{abstract}
In this paper we analyse the Lane-Emden system
\begin{equation}
\left\{
\begin{alignedat}{3}
-\Delta u = & \, \frac{\lambda f(x)}{(1-v)^2} & \quad \text{in} & \quad\Omega\\
-\Delta v = & \, \frac{\mu g(x)}{(1-u)^2} & \quad \text{in} & \quad\Omega\\
0\leq u &, v < 1 & \quad \text{in} & \quad \Omega\\
u = v & =  \, 0  &  \text{on} & \quad \partial\Omega\\
\end{alignedat}
\right.\tag{$S_{\lambda, \mu}$}
\end{equation}
where $\lambda$ and $\mu$ are positive parameters and $\Omega$ is a smooth bounded domain of $\mathbb{R}^N$ $( N \geq 1)$. Here we prove the existence of a critical curve $\Gamma$ which splits the positive quadrant of the $(\lambda,\mu)\text{-plane}$ into two disjoint sets $\mathcal{O}_1$ and $\mathcal{O}_2$ such that the problem \eqref{MS} has a smooth minimal stable solution $(u_\lambda,v_\mu)$ in $\mathcal{O}_1$, while for $(\lambda,\mu)\in\mathcal{O}_2$ there are no solutions of any kind. We also establish upper and lower estimates for the critical curve $\Gamma$ and regularity results on this curve if $N\leq 7$. Our proof is based on a delicate combination involving maximum principle and $L^p$ estimates for semi-stable solutions of \eqref{MS}.
\end{abstract}

\maketitle

\section{Introduction}
\hspace{0.6cm}In this paper we deal with Hamiltonian systems of coupled singular elliptic equations of second-order of the form 
\begin{equation}\label{MS}
\left\{
\begin{alignedat}{3}
-\Delta u = & \, \frac{\lambda f(x)}{(1-v)^2} & \quad \text{in} & \quad\Omega,\\
-\Delta v = & \, \frac{\mu g(x)}{(1-u)^2} & \quad \text{in} & \quad\Omega,\\
0\leq u &, v < 1 & \quad \text{in} & \quad \Omega,\\
u = v & =  \, 0  &  \text{on} & \quad \partial\Omega,\\
\end{alignedat}
\right.\tag{$S_{\lambda, \mu}$}
\end{equation}
where $\lambda$ and $\mu$ are positive parameters, $\Omega$ is a smooth bounded domain of $\mathbb{R}^N  \;(N \geq 2)$ and $f$ and $g$ satisfy the following conditions:
\begin{equation}\label{05}\tag{$H$}
\begin{alignedat}{1}
f,g\in C^\alpha(\overline{\Omega})\text{ for some }\alpha\in (0,1]\text{, }0\leq f,g\leq 1 \text{ and }\\
f,g>0 \text{ on a subset of }\Omega\text{ of positive measure.}
\end{alignedat}
\end{equation}

\subsection{Motivation and related results}
\hspace{0.6cm}System \eqref{MS} can be seen as a Lane-Emden type system with nonlinearities with negative exponents \cite{E,L,SZ2,S}. A lot of work has been devoted to existence and nonexistence of solutions to elliptic systems with continuous power like nonlinearities, among which we recall \cite{DOR,DOR2,DFF,DFM,DFS,HV,MP} and the survey \cite{DF}, just recently Lane-Emden type singular nonlinearities have been considered in \cite{G}. Here we address the problem of studying existence, non-existence and regularity results by means of the nonlinear eigenvalue problem \eqref{MS}, in which for the sake of clarity we consider a Coulomb nonlinear source though most results extend to more general situations. Related results for systems with continuous nonlinearities have been obtained in \cite{H,MM}.

Another important motivation to consider \eqref{MS} comes from recent works on the study of the equations that models MEMS
\begin{equation}\label{E1}\tag{$P_\lambda$}
\begin{cases}
-\Delta v  =  \lambda \, \frac{g(x)}{(1-v)^2} \quad & \text{ in }  \Omega, \\
0\leq v<1    & \text{ in }  \Omega,  \\
v = 0 & \text{ on }\partial \Omega.
\end{cases}
\end{equation}
Micro-electromechanical systems (MEMS) are often used to combine electronics with micro size mechanical devices in the design of various types of microscopic machinery. MEMS devices have therefore become key components of many commercial systems, including accelerometers for airbag deployment in vehicles, ink jet printer heads, optical switches and chemical sensors.

Nonlinear interaction described in terms of coupling of semilinear elliptic equations has revealed through the last decades a fundamental tool in studying nonlinear phenomena, see e.g. \cite{CDFM,DP,DFF,DFM,DH} and references therein. In all the above contexts the nonlinearity is fairly represented by a continuous function. More recently, a rigorous mathematical approach in modeling and designing Micro Electro Mechanical Systems has demanded the need to consider also nonlinearities which develop singularities. In a nutshell, one may think of MEMS' actuation as governed by the dynamic of a micro plate which deflects towards a fixed plate, under the effect of a Coulomb force, once that a drop voltage is applied.

In the stationary case, the naive model which describes this device cast into the second order elliptic PDE \eqref{E1}, where $\Omega$ is a bounded smooth domain in $\mathbb{R}^{N}$ and the positive function $g$ is bounded and related to dielectric properties of the material, see the survey \cite{EGG} and also \cite{GHOGUO,K,PB} for more technical aspects. The key feature of the equation in \eqref{E1} is retained by the discontinuity of the nonlinearity which blows up as $v\to 1^-$ and this corresponds in applications to a snap through of the device.

The general goal on the study of \eqref{E1} is to analyse the structure of the branch of solutions as well as their qualitative properties. The role of the positive parameter $\lambda$ is that of tuning the drop voltage, whence from the PDE point of view, yields the threshold between existence and non-existence of solutions which exist up to a maximal value $\lambda^*$. This is referred in literature as the regularity issue for extremal solutions, see for instance \cite{CRT,EGG,GW,PS2}.

Here we mention some recent papers on semilinear elliptic system of cooperative type which are closely related with our work. M.~Montenegro in \cite{MM} studied elliptic systems of the form $\Delta u=\lambda f(x,u,v)$, $\Delta v=\mu g(x,u,v)$ defined in $\Omega$ a smooth bounded domain under homogeneous Dirichlet boundary conditions. Under some suitable assumptions, which include in particular that the systems are cooperative, it was proved that there exists a monotone continuous curve $\Upsilon$ in the positive quadrant $\mathcal{Q}$ separating this set into two connected components: $U$ ``below'' $\Upsilon$, where there are $C^1(\overline{\Omega})$ minimal positive solutions, and $V$ ``above'' $\Upsilon$, where there is no such solution. For points on $\Upsilon$ there are weak solutions (in the sense of the weighted Lebesgue space $L^1_d(\Omega )$ , where $d(x)$ is the distance to the boundary $\partial \Omega$. Linearized stability of solutions in $U$ is also proved. The existence proof uses sub- and supersolutions, and the existence of weak solutions is shown by a limiting argument involving a priori estimates in $L^1_d(\Omega)$ for classical solutions.

A question that attracted a lot of attention is the regularity of the extremal solution. For the scalar case \eqref{E1}, F.~Mignot and J-P.~Puel \cite{MIGPUE1980} studied regularity results to certain nonlinearities, namely,  $g(u)=e^u$, $g(u)=u^m$ with $m>1$, $g(u)=1/(1-u)^k$ with $k>0$. Very recently, this analysis was complemented by N. Ghoussoub and Y. Guo \cite{GHOGUO} for the MEMS case in a bounded domain $\Omega$ under zero Dirichlet boundary condition, among other refined properties for stable steady states they proved that extremal solutions are smooth if $1\leq N\leq 7$ and $N=8$ is the critical dimension for this class of problems.

For elliptic systems, the stability inequality was first established in the study of Liouville theorems and De Giorgi's conjecture for systems, see \cite{FAZGHO}. There is a correspondence between regularity of extremal solutions and Liouville theorems up to blow up analysis and scaling. This inequality was used to establish regularity results in \cite{COWFAZ2014} for systems and \cite{COGH2014} for the fourth order case. C.~Cowan \cite{COW2011} considered the particular case of nonlinearities of Gelfand type, that is, when $f(x,u,v)=e^v$ and $g(x,u,v)=e^u$. He studied the regularity of the extremal solutions on the critical curve, precisely, he proved that if $3\leq N\leq 9$ and $(N-2)/8<\mu^*/\lambda^*<8/(N-2)$ then the associated extremal solutions are smooth. This implies that $N=10$ is the critical dimension for the Gelfand systems, because the scalar equation related with this class of problems may be singular if $N\geq 10$. Later, C.~Cowan and M.~Fazly in \cite{COWFAZ2014} examined the elliptic systems given by 
\begin{equation}\label{11}
-\Delta u=\lambda f^\prime(u)g(v) ,\quad -\Delta v=\mu f(u)g^\prime(v) \text{ in }\Omega,
\end{equation} 
and
\begin{equation}\label{12}
-\Delta u=\lambda f(u)g^\prime(v),\quad -\Delta v=\mu f^\prime(u)g(v)\text{ in }\Omega
\end{equation}
with zero Dirichlet boundary condition in a bounded convex domain $\Omega$. They proved that for a general nonlinearities $f$ and $g$ the extremal solution associated with \eqref{11} are bounded when $N\leq 3$. For a radial domain, they proved the extremal solution are bounded provided that $N<10$. The extremal solution associated with \eqref{12} are bounded in the case where $f$ is a general nonlinearity and $g(v)=(1+v)^q$ for $1<q<+\infty$ and $N\leq 3$. For the explicit nonlinearities of the form $f(u)=(1+u)^p$ and $g(v)=(1+v)^q$ certain regularity results are also obtained in higher dimensions for \eqref{11} and \eqref{12}.

In the recent years, this class of problems has two natural fourth order generalizations and extensions. D.~Cassani, J.~M.~do \'{O} and N.~Ghoussoub in \cite{COG} considered the problem
\begin{equation}\label{13}
\left\{
\begin{alignedat}{3}
\Delta^2 u = & \, \frac{\lambda f(x)}{(1-v)^2} & \quad \text{in} & \quad\Omega,\\
0\leq u <& 1 & \quad \text{in} & \quad \Omega,\\
u = \frac{\partial u}{\partial \eta} & =  \, 0  &  \text{on} & \quad \partial\Omega
\end{alignedat}
\right.
\end{equation}
with the biharmonic operator $\Delta^2$ and subject to Dirichlet conditions where $\eta$ denotes the outward pointing unit normal to $\partial\Omega$. In the physical model, they consider the plate situation in which flexural rigidity is now allowed whose effects however dominates over the stretching tension, neglecting non-local contributions. Since there is no maximum principle for $\Delta^2$ with Dirichlet boundary conditions for general domains, the authors exploit the positivity of the Green function due to T.~Boggio \cite{BOG} and consider problem \eqref{13} restrict to the ball. After that, C.~Cowan and N.~Ghoussoub \cite{COGH2014} studied the fourth order problem
\begin{equation}\label{14}
\left\{
\begin{alignedat}{3}
\Delta^2 u = & \, \lambda f(u) & \quad \text{in} & \quad\Omega,\\
0\leq u <& 1 & \quad \text{in} & \quad \Omega,\\
u=\Delta u & =  \, 0  &  \text{on} & \quad \partial\Omega
\end{alignedat}
\right.
\end{equation}
with Navier boundary conditions where $f$ is one of following nonlinearities: $f(u)=e^u$, $f(u)=(1+u)^p$ or $f(u)=(1-u)^{-p}$. Note that one can view the fourth order equation \eqref{14} as a system of the following type
\begin{equation}\label{15}
\left\{
\begin{alignedat}{3}
-\Delta v = & \, \lambda f(u) & \quad \text{in} & \quad\Omega,\\
-\Delta u = & \, v & \quad \text{on} & \quad\partial\Omega,\\
u= v & =  \, 0  &  \text{on} & \quad \partial\Omega.
\end{alignedat}
\right.
\end{equation}
Using this approach, they proved regularity results for semi-stable solutions and hence for the extremal solutions using a stability inequality obtained for the elliptic system \eqref{15} associated with the problem \eqref{14}. 

\subsection{Statement of main results} 
\hspace{0.6cm}The main goal of this article is to provide a supplement for the ongoing studies of nonlinear eigenvalue problems of MEMS type, as this is the case for references \cite{COW2011,COWFAZ2014,MM}. Our first result deals with the existence of a curve that split the positive quadrant into two connected components.

\begin{theorem}\label{04}
	Suppose that condition \eqref{05} holds. Then, there exists a curve $\Gamma$ that separates the positive quadrant $\mathcal{Q}$ of the $(\lambda,\mu)$-plane into two connected components $\mathcal{O}_1$ and $\mathcal{O}_2$. For $(\lambda,\mu)\in \mathcal{O}_1$, problem \eqref{MS} has a positive classical minimal solution $(u_\lambda,v_\lambda)$. Otherwise, if $(\lambda,\mu)\in \mathcal{O}_2$, there are no solutions.
\end{theorem}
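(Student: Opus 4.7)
The plan is to characterise the existence region as the subgraph of a critical curve, via the sub- and super-solution method coupled with an eigenfunction-type a priori bound. Set
\[
\mathcal{U}=\bigl\{(\lambda,\mu)\in\mathcal{Q}:\eqref{MS}\text{ admits a classical solution with }0\le u,v<1\bigr\}.
\]

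First I would verify that $\mathcal{U}$ contains a neighbourhood of the origin. Let $U_\lambda$ and $V_\mu$ solve $-\Delta U=\lambda f$, $-\Delta V=\mu g$ with zero Dirichlet data; by elliptic regularity and $0\le f,g\le 1$, $\|U_\lambda\|_\infty+\|V_\mu\|_\infty\to 0$ as $(\lambda,\mu)\to 0$. For $\lambda,\mu$ small enough that $2\|U_\lambda\|_\infty,\,2\|V_\mu\|_\infty\le 1-2^{-1/2}$, the pair $(\bar u,\bar v):=(2U_\lambda,2V_\mu)$ satisfies
\[
-\Delta\bar u=2\lambda f\ge\frac{\lambda f}{(1-\bar v)^{2}},\qquad -\Delta\bar v=2\mu g\ge\frac{\mu g}{(1-\bar u)^{2}},
\]
so $(\bar u,\bar v)$ is a super-solution; paired with the sub-solution $(0,0)$ and the fact that the nonlinearities are componentwise non-decreasing in $(u,v)$, the Picard-type iteration $-\Delta u_{n+1}=\lambda f/(1-v_n)^2$, $-\Delta v_{n+1}=\mu g/(1-u_n)^2$ started at $(0,0)$ produces a monotone sequence trapped in $[0,\bar u]\times[0,\bar v]$ whose limit is a classical solution of \eqref{MS}. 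Minimality is inherent in starting from $(0,0)$, and positivity of the limit follows from the strong maximum principle using \eqref{05}.

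Next I would establish the downward monotonicity of $\mathcal{U}$: if $(\lambda,\mu)\in\mathcal{U}$ admits a solution $(u,v)$ and $0<\lambda'\le\lambda$, $0<\mu'\le\mu$, then $(u,v)$ is a super-solution of $(S_{\lambda',\mu'})$, and the same iteration produces the minimal solution $(u_{\lambda'},v_{\mu'})$. For the upper bound on $\mathcal{U}$, test the first equation of \eqref{MS} against the principal Dirichlet eigenfunction $\phi_1>0$ of $-\Delta$ with eigenvalue $\lambda_1$; using $(1-v)^{-2}\ge 1$ and $u<1$,
\[
\lambda\int_\Omega f\phi_1\,dx\le\int_\Omega\frac{\lambda f\phi_1}{(1-v)^{2}}\,dx=\lambda_1\int_\Omega u\phi_1\,dx<\lambda_1\int_\Omega\phi_1\,dx,
\]
hence $\lambda\le\lambda^{\sharp}:=\lambda_1\int_\Omega\phi_1/\int_\Omega f\phi_1<\infty$, and an analogous bound gives $\mu\le\mu^{\sharp}$. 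Thus $\mathcal{U}\subset[0,\lambda^{\sharp}]\times[0,\mu^{\sharp}]$.

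Combining these facts, for each direction $(\alpha,\beta)\in\mathbb{S}^1\cap\mathcal{Q}$ the ray value $t^*(\alpha,\beta):=\sup\{t>0:(t\alpha,t\beta)\in\mathcal{U}\}$ lies in $(0,\infty)$, and the downward monotonicity of $\mathcal{U}$ forces the set $\Gamma:=\{t^*(\alpha,\beta)(\alpha,\beta):(\alpha,\beta)\in\mathbb{S}^1\cap\mathcal{Q}\}$ to separate $\mathcal{Q}$ into the interior $\mathcal{O}_1:=\mathrm{int}(\mathcal{U})$, where the minimal solution constructed above exists, and the exterior $\mathcal{O}_2:=\mathcal{Q}\setminus\overline{\mathcal{U}}$, where no solution (classical or otherwise) can exist by maximality. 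Connectedness of both components follows from the fact that $\mathcal{U}$ is star-shaped with respect to the origin within $\mathcal{Q}$. The main obstacle I expect is ensuring that the monotone iterates remain strictly below the singular barrier $\{u=1\}\cup\{v=1\}$ as $(\lambda,\mu)$ approaches $\Gamma$, and upgrading the monotonicity of $t^*$ to continuity so that $\Gamma$ is a genuine continuous curve rather than a monotone graph with possible jumps; these regularity issues are precisely where the $L^p$ estimates for semi-stable solutions announced in the abstract enter the picture.
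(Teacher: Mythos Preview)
Your proposal is correct and follows essentially the same architecture as the paper: nonemptiness via an explicit super-solution, downward monotonicity, boundedness via testing against the principal eigenfunction, then a ray-wise definition of $\Gamma$ and connectedness of the two pieces via star-shapedness. The only cosmetic difference is your choice of super-solution (rescaled solutions of the linear problems $-\Delta U=\lambda f$, $-\Delta V=\mu g$) versus the paper's (a multiple of the principal eigenfunction on an enclosing ball); both work equally well.

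Your closing paragraph, however, anticipates difficulties that do not belong to this theorem. The paper's proof of Theorem~\ref{04} uses no $L^p$ estimates for semi-stable solutions and does not establish continuity of $\Gamma$: it simply shows that $\mathcal{O}_1=\Lambda\setminus\Gamma$ and $\mathcal{O}_2=\mathcal{Q}\setminus(\Lambda\cup\Gamma)$ are each path-connected, which follows directly from the downward monotonicity (star-shapedness) you already have. The $L^p$ machinery enters only later, for the regularity of the extremal solution on $\Gamma$ (Theorem~\ref{08}), and the question of whether the iterates stay strictly below $1$ is automatic for $(\lambda,\mu)\in\mathcal{O}_1$ because the super-solution furnished by any classical solution at a larger parameter value provides the barrier. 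So you may safely drop those caveats.
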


\hspace{0.6cm}The Theorem \ref{06} and Theorem \ref{07} contain upper and lower estimates for the critical curve. These estimates depend only on $f,g, |\Omega|$ and the dimension $N$, namely,

\begin{theorem}\label{06}
	Suppose f,\,g satisfy \eqref{05}. Then the region $\mathcal{O}_1$ is nonempty, more precisely, there exist a positive constant $C_N$ which depends only of the dimension $N$ such that
	\[
	(0,a_{(f,|\Omega|,N)}]\times(0,a_{(g,|\Omega|,N)}] \subset \mathcal{O}_1,
	\]
	where
	\[
	a_{(f,|\Omega|,N)}:=C_{N}\frac{1}{\sup_\Omega f(x)}\left(\frac{\omega_N}{|\Omega|}\right)^{2/N},\quad a_{(g,R,N)}:=C_{N}\frac{1}{\sup_\Omega
		g(x)}\left(\frac{\omega_N}{|\Omega|}\right)^{2/N}
	\]
	and
	\[
	C_{N}=\max\left\{\frac{8N}{27},\frac{6N-8}{9} \right\}.
	\]
\end{theorem}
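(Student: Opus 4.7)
My plan is the classical method of sub- and super-solutions. Since $(u,v)=(0,0)$ is a sub-solution of \eqref{MS}, the task reduces to producing a super-solution $(\bar u,\bar v)$, with $\bar u,\bar v\in C^2(\Omega)\cap C(\overline\Omega)$, $0\leq\bar u,\bar v<1$ in $\Omega$, $\bar u=\bar v=0$ on $\partial\Omega$, and
\[
-\Delta\bar u\geq\frac{\lambda f(x)}{(1-\bar v)^2},\qquad -\Delta\bar v\geq\frac{\mu g(x)}{(1-\bar u)^2}\quad\text{in }\Omega;
\]
a standard monotone Picard iteration then yields a minimal positive classical solution, placing $(\lambda,\mu)\in\mathcal{O}_1$. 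By monotonicity of \eqref{MS} in $(\lambda,\mu)$ it suffices to handle the top-right corner of the rectangle, where $\lambda\sup_\Omega f=\mu\sup_\Omega g=:\Lambda=C_N(\omega_N/|\Omega|)^{2/N}$, and by choosing a symmetric super-solution $\bar u=\bar v=\bar w$ the whole problem collapses to the scalar inequality
\[
-\Delta\bar w\geq\frac{\Lambda}{(1-\bar w)^2}\quad\text{in }\Omega,\qquad \bar w|_{\partial\Omega}=0,\qquad 0\leq\bar w<1.
\]
I will exhibit two independent constructions of such $\bar w$, matching the two halves of $C_N$, and keep the better of the two.

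For the first bound $\Lambda\leq(8N/27)(\omega_N/|\Omega|)^{2/N}$, I use the torsion function $\phi$ of $\Omega$ (solving $-\Delta\phi=1$ in $\Omega$, $\phi|_{\partial\Omega}=0$). Talenti's symmetrization comparison with the ball $B_R$ of equal measure, $R=(|\Omega|/\omega_N)^{1/N}$, yields the sharp bound $\|\phi\|_{L^\infty(\Omega)}\leq\tau_N:=R^2/(2N)$. The ansatz $\bar w=c\phi$ then turns the super-solution inequality into $c(1-c\tau_N)^2\geq\Lambda$, and the elementary optimization $\max_{t\in(0,1)}t(1-t)^2=4/27$ at $t=1/3$ gives $\Lambda\leq 4/(27\tau_N)=(8N/27)R^{-2}$.

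For the second bound $\Lambda\leq(6N-8)(\omega_N/|\Omega|)^{2/N}/9$ I transfer the singular MEMS profile on $B_R$ to $\Omega$ via Talenti. A direct polar-coordinate computation shows that $U^*(x)=1-(|x|/R)^{2/3}$ satisfies $-\Delta U^*=(6N-8)R^{-2}/9\cdot(1-U^*)^{-2}$ on $B_R\setminus\{0\}$, so $U^*$ is a (singular) super-solution of the scalar MEMS equation on $B_R$ whenever $\Lambda\leq(6N-8)R^{-2}/9$. I then run in parallel the Picard iteration $(u_n,v_n)$ for \eqref{MS} on $\Omega$, starting from $(0,0)$, and the scalar iteration $W_0=0$, $-\Delta W_{n+1}=\Lambda(1-W_n)^{-2}$ on $B_R$. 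Exploiting that $s\mapsto(1-s)^{-2}$ is increasing, so that it commutes with Schwarz rearrangement, together with Talenti's pointwise inequality, an induction on $n$ yields $u_n^{\#}(x),\,v_n^{\#}(x)\leq W_n(x)\leq U^*(x)$ on $B_R$. This, combined with Schauder regularity, forces the iterates on $\Omega$ to stay uniformly bounded away from $1$ and to converge in $C^2$ to a classical minimal solution of \eqref{MS}.

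The main technical obstacle lies precisely in this second construction in the borderline regime $N\geq 8$ at the critical value of $\Lambda$, where $U^*$ itself is the extremal (weak) solution on the ball and the scalar iterates $W_n$ on $B_R$ concentrate at the origin. The finer analysis needed to conclude that $\|u_n\|_\infty,\|v_n\|_\infty$ stay uniformly below $1$ on $\Omega$ rests on the strict Talenti inequality, applicable whenever $\Omega$ is not itself a ball, together with the $L^p$-estimates for semi-stable solutions developed later in the paper. Keeping the better of the two constructions gives $C_N=\max\{8N/27,(6N-8)/9\}$ and concludes the proof.
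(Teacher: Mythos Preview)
Your overall strategy coincides with the paper's: build explicit radial super-solutions on the ball $B_R$ of equal volume and then transfer to $\Omega$ by Schwarz symmetrization. The paper uses exactly the two profiles you chose, $w(x)=\tfrac13\bigl(1-|x|^2/R^2\bigr)$ and $U^*(x)=1-(|x|/R)^{2/3}$, in its Lemma~\ref{forma1}, and then invokes its symmetrization result (Theorem~\ref{monoto1}) to pass to general $\Omega$. Your torsion-function argument for the $8N/27$ bound is a pleasant direct variant that stays on $\Omega$ throughout, using only Talenti's bound $\|\phi\|_{L^\infty}\le R^2/(2N)$; on the ball it collapses to the paper's $w$, so the two are equivalent.

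Where your proposal breaks down is the final paragraph. You are right to flag that $U^*$ touches $1$ at the origin and is not a classical super-solution---a point the paper's Lemma~\ref{forma1} glosses over by simply calling $(U^*,U^*)$ a super-solution---but your proposed cure does not work. ``Strict Talenti'' gives nothing when $\Omega$ is itself a ball, and in that case, with $f=g\equiv 1$ and $N\ge 8$, the endpoint $\lambda=\mu=(6N-8)/(9R^2)$ is precisely the extremal parameter on the diagonal, where the minimal solution is the singular $U^*$ and no classical solution exists; no amount of $L^p$-theory for semi-stable solutions will manufacture one. Those estimates, moreover, are proved later only for $N\le 7$ and for the extremal solution \emph{a posteriori}, not as an a~priori barrier for the iteration, so invoking them here is both forward-referencing and inapplicable. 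The clean way out is to argue strictly below the endpoint: for any $\Lambda<(6N-8)/(9R^2)$ the scalar problem on $B_R$ has a classical minimal solution bounded away from $1$ (since this value is $\le\lambda^*(B_R)$, with equality only when $N\ge 8$), and then your rearrangement comparison goes through without difficulty. This delivers the open rectangle, which already suffices to show $\mathcal O_1$ is nonempty and gives the stated lower bounds for the critical curve.
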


\begin{theorem}\label{07}
	Suppose f,\,g satisfy \eqref{05}. Assume that $\inf_\Omega f(x)>0$ (respectively $\inf_\Omega g(x)>0$), then
	\[
	\lambda^*\leq \frac{4\mu_1}{27}\frac{1}{\inf_\Omega f(x)} \; \left(\mbox{
		respectively }\;\; \mu^* \leq \frac{4\mu_1}{27}\frac{1}{\inf_\Omega
		g(x)}\right),
	\]
	where $\mu_1$ is the first eigenvalue of $(-\Delta,\, H_{0}^{1}(\Omega))$. Therefore, if $\inf_\Omega f(x)>0$ and $\inf_\Omega g(x)>0$ the region $\mathcal{O}_1$ is bounded, precisely,
	\[
	\mathcal{O}_1 \subset \left(0,\frac{4\mu_1}{27}\frac{1}{\inf_\Omega
		f(x)}\right)\times \left(0,\frac{4\mu_1}{27}\frac{1}{\inf_\Omega
		g(x)}\right).
	\]
\end{theorem}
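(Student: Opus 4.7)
The plan is to mimic the classical Mignot--Puel argument for the scalar MEMS equation \eqref{E1}, in which one tests against the first Dirichlet eigenfunction of the Laplacian. Let $\varphi_1 > 0$ be the first eigenfunction of $-\Delta$ on $H^1_0(\Omega)$ with eigenvalue $\mu_1$, normalized so that $\int_\Omega \varphi_1\,dx = 1$.

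Fix a classical solution $(u,v)$ of \eqref{MS} for $(\lambda,\mu) \in \mathcal{O}_1$, for instance the minimal solution provided by Theorem~\ref{04}. Multiplying the first equation of \eqref{MS} by $\varphi_1$ and integrating by parts (which is legitimate since $u,v < 1$ uniformly on $\mathcal{O}_1$ and the boundary terms vanish), one obtains
\[
\mu_1 \int_\Omega u\,\varphi_1\,dx \;=\; \lambda \int_\Omega \frac{f\,\varphi_1}{(1-v)^2}\,dx \;\geq\; \lambda\bigl(\inf_\Omega f\bigr) \int_\Omega \frac{\varphi_1}{(1-v)^2}\,dx.
\]
Since $t \mapsto (1-t)^{-2}$ is convex on $[0,1)$, Jensen's inequality with respect to the probability measure $\varphi_1\,dx$ yields $\int_\Omega \varphi_1(1-v)^{-2}\,dx \geq (1-\bar v)^{-2}$, where $\bar v := \int_\Omega v\,\varphi_1\,dx \in [0,1)$. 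Combining these two estimates and writing $\bar u := \int_\Omega u\,\varphi_1\,dx$, one arrives at the key inequality
\[
\mu_1\,\bar u\,(1-\bar v)^2 \;\geq\; \lambda\,\inf_\Omega f.
\]

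The final step is to deduce $\lambda\,\inf_\Omega f \leq 4\mu_1/27$ via the elementary fact $\max_{t \in [0,1]} t(1-t)^2 = 4/27$, attained at $t = 1/3$. In the scalar MEMS setting the quantity on the left is exactly $\mu_1\,\bar u(1-\bar u)^2$, and the bound follows at once; here, by contrast, one must control the mixed quantity $\bar u(1-\bar v)^2$ where the two averages need not coincide. I would bridge this gap by pairing the above estimate with the symmetric one derived from the second equation of \eqref{MS},
\[
\mu_1\,\bar v\,(1-\bar u)^2 \;\geq\; \mu\,\inf_\Omega g,
\]
and then invoking the monotonicity of the minimal branch and the sub/super-solution construction of Theorem~\ref{04} to reduce, via a continuity/comparison argument along the critical curve $\Gamma$, to the diagonal configuration where the scalar optimization applies. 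The bound for $\mu^*$ is obtained by the parallel argument applied to the second equation of \eqref{MS}, and the rectangular inclusion for $\mathcal{O}_1$ stated at the end of Theorem~\ref{07} is then an immediate consequence. The principal obstacle to carrying the plan through is precisely this reduction to the diagonal case, where a careful use of the Hamiltonian structure and of the stability of the minimal solution is required.
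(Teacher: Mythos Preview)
Your opening move---multiplying the first equation by the positive first eigenfunction $\psi_1$ and integrating---is exactly what the paper does, arriving at
\[
\int_\Omega\Bigl(-\mu_1 u+\frac{\lambda f(x)}{(1-v)^2}\Bigr)\psi_1\,dx=0.
\]
The paper, however, does not invoke Jensen's inequality or pass to the averages $\bar u,\bar v$. It argues pointwise: if $\lambda>\tfrac{4\mu_1}{27}(\inf_\Omega f)^{-1}$ then the integrand above is asserted to be strictly positive at every $x\in\Omega$, which is incompatible with the identity and hence rules out $(\lambda,\mu)\in\Lambda$. The justification is dispatched with the phrase ``after a simple calculation''; the scalar analogue rests on the elementary pointwise inequality $(1-t)^{-2}\ge\tfrac{27}{4}\,t$ for $t\in[0,1)$, equivalent to $t(1-t)^2\le 4/27$. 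No reduction to a diagonal configuration, no appeal to stability, and no continuity argument along $\Gamma$ appear in the paper's proof.

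The obstacle you isolate is genuine and is exactly where your proposal stalls: the mixed quantity $\bar u(1-\bar v)^2$ (or, in the paper's pointwise version, $u(x)(1-v(x))^2$) is not automatically bounded by $4/27$ when $u$ and $v$ are distinct $[0,1)$-valued functions---taking $u$ close to $1$ and $v$ close to $0$ shows the quantity can approach $1$. Your suggested remedy---pair the two averaged inequalities and then ``reduce to the diagonal'' via a continuity/comparison argument along $\Gamma$ using the Hamiltonian structure and stability---is not carried out, and as described it is not a proof: you have not specified which functional is being continued, why the diagonal configuration is extremal for it, or how semi-stability would enter. This is the concrete gap. If your goal is to match the paper, drop the Jensen step and argue pointwise as above; the paper's own argument is terse at precisely the same juncture, but it does not attempt the detour through averages and the critical curve that you outline.
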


\hspace{0.6cm}In the next two theorems we discuss the monotonicity properties of the critical curve for system \eqref{MS}. We mention that similar results have been proved for the scalar case \eqref{E1} in \cite{EGG2007, GHOGUO}. In \cite{EGG2007}, it was shown that the permittivity profile $g$ can be change the bifurcation diagram and alter the critical dimension for compactness for the equation \eqref{E1}.

\begin{theorem}\label{pitomba}
	Suppose that condition \eqref{05} holds. If (\ref{MS}) has a solution in $\Omega$, then it also has a solution for any subdomain $\Omega'\subset \Omega$ for which the Green's function exists. Furthermore, $\lambda^*(\Omega')\geq\lambda^*(\Omega)$ and for the corresponding minimal solutions, we have $u_{\Omega'}(x)\leq u_{\Omega}(x)$ and $v_{\Omega'}(x)) \leq v_{\Omega}(x)$ for all $x\in\Omega$.
\end{theorem}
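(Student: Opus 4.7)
The plan is to use a sub- and supersolution argument on the subdomain $\Omega'$, exploiting the cooperative structure of (\ref{MS})---namely, that the right-hand sides $\lambda f(x)/(1-v)^2$ and $\mu g(x)/(1-u)^2$ are monotone nondecreasing in $v$ and $u$, respectively.

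\textbf{Step 1 (Restriction produces a supersolution).} Let $(u_\Omega, v_\Omega)$ solve (\ref{MS}) on $\Omega$, and fix a subdomain $\Omega' \subset \Omega$ admitting a Green's function $G_{\Omega'}$. Restricting to $\Omega'$, the pair $(u_\Omega|_{\Omega'}, v_\Omega|_{\Omega'})$ satisfies the same PDEs in $\Omega'$ but with boundary data $u_\Omega, v_\Omega \geq 0$ on $\partial \Omega'$ (instead of zero). Thus it is a classical supersolution of (\ref{MS}) posed on $\Omega'$.

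\textbf{Step 2 (Monotone iteration from zero).} Set $(u_0,v_0) \equiv (0,0)$ and define inductively
\begin{equation*}
u_{n+1}(x) = \lambda \int_{\Omega'} G_{\Omega'}(x,y)\,\frac{f(y)}{(1-v_n(y))^2}\,dy, \qquad
v_{n+1}(x) = \mu \int_{\Omega'} G_{\Omega'}(x,y)\,\frac{g(y)}{(1-u_n(y))^2}\,dy.
\end{equation*}
By induction, using nonnegativity of $G_{\Omega'}$ and the monotonicity of the nonlinearities, one checks simultaneously that
\begin{equation*}
0 = u_0 \leq u_1 \leq \dots \leq u_n \leq u_\Omega|_{\Omega'}, \qquad 0 = v_0 \leq v_1 \leq \dots \leq v_n \leq v_\Omega|_{\Omega'}.
\end{equation*}
The upper bounds come from comparing $u_{n+1}$ with $u_\Omega|_{\Omega'}$ via the Green's representation and the fact that $(u_\Omega,v_\Omega)$ is a supersolution on $\Omega'$; monotonicity of the sequence follows from the two-sided induction on $(u_n,v_n)$.

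\textbf{Step 3 (Passage to the limit).} The sequences are pointwise monotone and bounded above by quantities strictly less than $1$ on $\Omega'$ (since $u_\Omega, v_\Omega < 1$ there). Let $U = \lim u_n$ and $V = \lim v_n$. By monotone convergence we pass to the limit in the integral identities to obtain
\begin{equation*}
U(x) = \lambda \int_{\Omega'} G_{\Omega'}(x,y)\,\frac{f(y)}{(1-V(y))^2}\,dy, \qquad
V(x) = \mu \int_{\Omega'} G_{\Omega'}(x,y)\,\frac{g(y)}{(1-U(y))^2}\,dy,
\end{equation*}
so $(U,V)$ is a distributional, and then by Schauder theory a classical, solution of (\ref{MS}) on $\Omega'$, with $0 \leq U < 1$, $0 \leq V < 1$.

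\textbf{Step 4 (Minimality and conclusion).} By Theorem \ref{04} applied to $\Omega'$, the solution thus constructed dominates the minimal solution: $u_{\Omega'} \leq U \leq u_\Omega|_{\Omega'}$ and $v_{\Omega'} \leq V \leq v_\Omega|_{\Omega'}$. Since this construction works for every $(\lambda,\mu) \in \mathcal{O}_1(\Omega)$, one has $\mathcal{O}_1(\Omega) \subseteq \mathcal{O}_1(\Omega')$, which yields the domain monotonicity of the critical values, in particular $\lambda^*(\Omega') \geq \lambda^*(\Omega)$.

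The main obstacle is keeping both iterates simultaneously below $1$ during the induction: because the two equations are coupled through singular nonlinearities, one cannot treat $u$ and $v$ separately. The cure is the two-sided induction of Step 2, which uses the supersolution $(u_\Omega,v_\Omega)|_{\Omega'}$ as a uniform pointwise barrier preventing either iterate from approaching $1$ and thus keeping the nonlinearities bounded throughout the iteration.
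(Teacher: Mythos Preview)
Your proof is correct and follows essentially the same approach as the paper: monotone iteration from $(0,0)$ on $\Omega'$ via the Green's function $G_{\Omega'}$, with the $\Omega$-solution $(u_\Omega,v_\Omega)$ serving as the upper barrier that keeps the iterates away from $1$. The only cosmetic difference is that the paper compares the $\Omega'$-iterates with the $\Omega$-iterates step by step using the inequality $G_{\Omega'}\leq G_\Omega$ (and then lets $u_{n,\Omega}\uparrow u_\Omega$), whereas you bound them directly against $(u_\Omega,v_\Omega)|_{\Omega'}$ viewed as a supersolution; both comparisons are the same maximum-principle fact.
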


\begin{theorem}\label{monoto1}
	Let $f,g$ satisfying \eqref{05} and $f^\sharp,g^\sharp$ the Schwarz symmetrization of $f$ and $g$ respectively. Then $\lambda^*(\Omega,f,g)\geq \lambda^*(B_R,f^\sharp,g^\sharp)$ and for each $\lambda \in (0,\lambda^\ast(B_R,f,g))$ we have $\Gamma_{(\Omega,f,g)}(\lambda)\geq\Gamma_{(B_R,f^\sharp,g^\sharp)}(\lambda)$.
\end{theorem}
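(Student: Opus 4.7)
The plan is to deduce Theorem \ref{monoto1} by transferring a minimal solution of $(S_{\lambda,\mu})$ on the ball $B_R$ with rearranged weights $f^\sharp, g^\sharp$ back to a classical solution on $\Omega$ with the original weights $f,g$, using a parallel monotone iteration controlled by Talenti's symmetrization comparison. If existence at $(\lambda,\mu)$ in $(B_R,f^\sharp,g^\sharp)$ can be propagated to $(\Omega,f,g)$, then $\mathcal{O}_1(B_R,f^\sharp,g^\sharp) \subset \mathcal{O}_1(\Omega,f,g)$, which immediately gives both $\Gamma_{(\Omega,f,g)}(\lambda)\geq\Gamma_{(B_R,f^\sharp,g^\sharp)}(\lambda)$ and $\lambda^*(\Omega,f,g)\geq\lambda^*(B_R,f^\sharp,g^\sharp)$.

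Concretely, I will set $u_0=v_0=0$ in $\Omega$ and $U_0=V_0=0$ in $B_R$, and define recursively, with zero Dirichlet data,
\begin{align*}
-\Delta u_{n+1} &= \frac{\lambda f}{(1-v_n)^2}, & -\Delta v_{n+1} &= \frac{\mu g}{(1-u_n)^2} & &\text{in }\Omega,\\
-\Delta U_{n+1} &= \frac{\lambda f^\sharp}{(1-V_n)^2}, & -\Delta V_{n+1} &= \frac{\mu g^\sharp}{(1-U_n)^2} & &\text{in }B_R.
\end{align*}
By the monotone-iteration scheme already established in the proof of Theorem~\ref{04}, both sequences are pointwise increasing. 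Assume $(\lambda,\mu)$ lies strictly below $\Gamma_{(B_R,f^\sharp,g^\sharp)}$; then the ball iterates converge to the radial minimal solution $(U,V)$ with $U,V<1$. The heart of the argument is the inductive Schwarz-comparison
\[
u_n^\sharp(x)\leq U_n(x),\qquad v_n^\sharp(x)\leq V_n(x),\qquad x\in B_R,
\]
for every $n\geq 0$. For the inductive step I will apply Talenti's theorem to the elliptic equation solved by $u_{n+1}$ (and symmetrically to $v_{n+1}$) to get $u_{n+1}^\sharp\leq w_{n+1}$, where $w_{n+1}$ solves in $B_R$ the equation with right-hand side $[\lambda f/(1-v_n)^2]^\sharp$. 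Using the inductive hypothesis $v_n^\sharp\leq V_n$, one has $(1-V_n)^{-2}\geq (1-v_n^\sharp)^{-2}$, and then a product-rearrangement inequality yields $[\lambda f/(1-v_n)^2]^\sharp\leq \lambda f^\sharp/(1-V_n)^2$ on $B_R$; the maximum principle on the ball then gives $w_{n+1}\leq U_{n+1}$. Since $u_n^\sharp\leq U_n<1$ and $v_n^\sharp\leq V_n<1$, the iterates in $\Omega$ stay uniformly below $1$, so standard elliptic regularity produces a classical limit $(u,v)$ solving $(S_{\lambda,\mu})$ in $\Omega$.

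The delicate point, and the main obstacle, is the pointwise rearrangement inequality $[\lambda f/(1-v_n)^2]^\sharp \leq \lambda f^\sharp/(1-V_n)^2$ on $B_R$: the Schwarz symmetrization of a product of two functions is generally \emph{not} dominated by the product of the individual symmetrizations. I plan to handle this by a level-set argument in the spirit of Alvino--Lions--Trombetti, exploiting that both candidate upper bounds are radially decreasing functions and reducing the pointwise comparison to a distribution-function inequality; the Hardy--Littlewood inequality together with the inductive control $v_n^\sharp\leq V_n$ and the specific monotone structure of the MEMS singularity $s\mapsto (1-s)^{-2}$ should close the estimate. Finally, letting $\mu\uparrow \Gamma_{(B_R,f^\sharp,g^\sharp)}(\lambda)$ for fixed $\lambda$, and using that $\mathcal{O}_1(\Omega,f,g)$ is open below $\Gamma_{(\Omega,f,g)}$, delivers the curve inequality; the critical-parameter bound follows by specializing to the intersection with the coordinate axis.
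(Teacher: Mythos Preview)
Your strategy is the same as the paper's: run the minimal monotone iteration simultaneously on $(\Omega,f,g)$ and on $(B_R,f^\sharp,g^\sharp)$, and use Talenti's comparison to show $u_n^\sharp\le U_n$, $v_n^\sharp\le V_n$ inductively, so that the $\Omega$-iterates stay uniformly below $1$ and converge to a classical solution. The paper's own proof is in fact terser than yours: it simply asserts that $u_n^\sharp\le\widehat u$ and $v_n^\sharp\le\widehat v$ and then draws the uniform bound $u_n\le\max_{B_R}\widehat u<1$, without writing out the inductive step at all. So on the level of overall architecture you are aligned with the paper, and indeed more explicit.

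The point you single out as ``delicate'' is exactly the place where both arguments need care, and you are right to flag it. You should be aware that the pointwise bound
\[
\Bigl[\tfrac{f}{(1-v_n)^2}\Bigr]^{\sharp}\ \le\ \tfrac{f^\sharp}{(1-V_n)^2}
\]
is \emph{not} a consequence of general product-rearrangement inequalities: $(FG)^\sharp\le F^\sharp G^\sharp$ fails pointwise for generic nonnegative $F,G$. What does survive is an integral (Hardy--Littlewood / O'Neil type) comparison of the form $\int_0^t (FG)^*\le \int_0^t F^*G^*$, and this, combined with $[(1-v_n)^{-2}]^\sharp=(1-v_n^\sharp)^{-2}\le(1-V_n)^{-2}$ from the inductive hypothesis and the fact that $f^\sharp(1-V_n)^{-2}$ is radially decreasing on $B_R$, is precisely what the Alvino--Lions--Trombetti machinery feeds into the Talenti-type comparison to conclude $u_{n+1}^\sharp\le U_{n+1}$. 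In other words, your instinct to route through ALT is correct, but you should state the exact integral inequality you use and check that the hypothesis ``$G$ radially decreasing and $\int_0^t F^*\le\int_0^t G$ for all $t$'' is what Talenti's refined comparison actually requires; do not attempt to close the step with a pointwise product bound, which is false. Once that is nailed down, the passage to $\Gamma_{(\Omega,f,g)}(\lambda)\ge\Gamma_{(B_R,f^\sharp,g^\sharp)}(\lambda)$ and $\lambda^*(\Omega,f,g)\ge\lambda^*(B_R,f^\sharp,g^\sharp)$ follows exactly as you describe.
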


\hspace{0.6cm}Analogously to scalar case (see \cite{GHOGUO}), we can define the notion of extremal solution of \eqref{MS} for points on the critical curve. Precisely, let us consider a sequence $(\lambda_n,\mu_n)$ below the critical curve converging to a point $(\lambda^*,\mu^*)$ on the critical curve. In view of Theorem \ref{04}, we can consider the minimal solution $(u_{\lambda_n},v_{\mu_n})$ of System $(S_{\lambda_n,\mu_n})$. Now, we can define the extremal solution $(u^*,v^*)$ at $(\lambda^*,\mu^*)$ by passing to the limit when $n\rightarrow +\infty$, namely,
\[
(u^*,v^*)=\lim_{n\rightarrow +\infty}(u_{\lambda_n},v_{\mu_n}).
\]

The following theorem deals with regularity properties for solutions of \eqref{MS}. The main idea is to apply an appropriate test function in the stability inequality (see Lemma \ref{03} below). This inequality is the main trick to tackle the problem for the case of systems and fourth order equations. This kind of argument involving stability inequality and Moser's iteration method has been used by M.~Crandall and P.~Rabinowitz \cite{CRARAB} and was originated in Harmonic maps and differential geometry.

\begin{theorem}\label{08}
	Assume that $f,g=1$. Then the extremal solution $(u^*,v^*)$ of System $(S_{\lambda^*,\mu^*})$ is smooth when $N\leq 7.$
\end{theorem}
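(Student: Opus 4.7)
The plan follows the template devised by Ghoussoub-Guo \cite{GHOGUO} for the scalar MEMS and its extension to Hamiltonian systems in \cite{COW2011,COWFAZ2014}. I would work with a sequence $(\lambda_n,\mu_n)\in\mathcal{O}_1$ converging to $(\lambda^{*},\mu^{*})$ with minimal semi-stable classical solutions $(u_n,v_n):=(u_{\lambda_n},v_{\mu_n})$; the statement for the extremal pair $(u^{*},v^{*})$ then follows by passing to the limit, once uniform bounds have been established.

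The first step is to insert the Moser-type test pair
\[
\eta:=(1-u_n)^{-\alpha}-1,\qquad \xi:=(1-v_n)^{-\beta}-1,\qquad \alpha,\beta>0,
\]
into the semi-stability inequality of Lemma~\ref{03}. Using each equation as a multiplier---testing $-\Delta u_n=\lambda_n(1-v_n)^{-2}$ against a primitive of $(1-u_n)^{-2\alpha-2}$ and similarly on the $v$-side---integration by parts yields the identities
\[
\int_\Omega |\nabla\eta|^{2}\,dx=\frac{\lambda_n\alpha^{2}}{2\alpha+1}\int_\Omega\frac{(1-u_n)^{-2\alpha-1}-1}{(1-v_n)^{2}}\,dx,
\]
\[
\int_\Omega |\nabla\xi|^{2}\,dx=\frac{\mu_n\beta^{2}}{2\beta+1}\int_\Omega\frac{(1-v_n)^{-2\beta-1}-1}{(1-u_n)^{2}}\,dx.
\]
Combining Cauchy--Schwarz applied to $\int\nabla\eta\cdot\nabla\xi$ with the stability inequality translates the problem into an algebraic estimate between the three quantities
\[
A=\int_\Omega\frac{dx}{(1-u_n)^{\alpha+3/2}(1-v_n)^{\beta+3/2}},\quad B=\int_\Omega\frac{dx}{(1-u_n)^{2\alpha+1}(1-v_n)^{2}},\quad C=\int_\Omega\frac{dx}{(1-u_n)^{2}(1-v_n)^{2\beta+1}},
\]
up to subleading contributions coming from the $-1$ terms inside $\eta,\xi$.

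Second, I would collect the leading-order terms to produce an algebraic factor of the form $4(2\alpha+1)(2\beta+1)-\alpha^{2}\beta^{2}$ in front of $A$. Inside the open region
\[
\mathcal{R}=\{(\alpha,\beta)\in\mathbb{R}_+^2:\alpha\beta<2\sqrt{(2\alpha+1)(2\beta+1)}\}
\]
this factor is strictly positive, and Holder interpolation should absorb $B$, $C$ and every cross-term into $A$ plus $|\Omega|$, yielding a uniform-in-$n$ bound $A\leq K(\alpha,\beta,|\Omega|,N)$. The upshot is that both $(1-u_n)^{-1}$ and $(1-v_n)^{-1}$ are controlled in $L^{p}$ for an exponent $p=p(\alpha,\beta)$ depending only on the admissible point $(\alpha,\beta)\in\mathcal{R}$.

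Third, from the uniform $L^{p}$-control I would iterate the $L^{p}$ theory for the Laplacian on each equation separately: if $(1-u_n)^{-1}\in L^{p}$ and $(1-v_n)^{-1}\in L^{q}$ uniformly, then $u_n,v_n\in W^{2,\min(p,q)/2}$, and a standard Moser-type bootstrap delivers uniform $L^{\infty}$-bounds on $(1-u_n)^{-1},(1-v_n)^{-1}$ provided the starting integrability exponent exceeds $N/2$. A direct optimization of $2\alpha+2\beta+3$ over $(\alpha,\beta)\in\mathcal{R}$ shows this succeeds precisely when $N\leq 7$, mirroring the scalar threshold $\alpha<2+\sqrt{6}$ of \cite{GHOGUO}. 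Passing to the limit then gives $\|u^{*}\|_\infty,\|v^{*}\|_\infty<1$, and classical elliptic regularity upgrades $(u^{*},v^{*})$ to a smooth solution.

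The main obstacle is the bookkeeping in the second step: the expansion of $\eta\xi$ and of the products $\|\nabla\eta\|_2\|\nabla\xi\|_2$ produces several cross-terms of lower singularity, and one must verify that Holder interpolation really does absorb each of them into $A$ with constants that remain under control as $u_n,v_n\to 1^{-}$. Equally delicate is pinning down the optimal relation between $(\alpha,\beta)$ and $N$; the sharpness of the threshold $N=7$ is exactly what breaks for $N\geq 8$, as in the scalar MEMS scenario.
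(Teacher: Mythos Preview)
Your strategy is correct and very close to the paper's, so this is a variant rather than a different route. The paper also tests the equation $-\Delta u=\lambda(1-v)^{-2}$ against $(1-u)^{-\alpha}-1$ and plugs the power $(1-u)^{(1-\alpha)/2}-1$ into the scalar stability inequality of Lemma~\ref{03}; after a H\"older step this yields an inequality for two mixed integrals $X,Y$, and the paper then uses the $u\leftrightarrow v$ symmetry of the system to obtain the companion inequality and \emph{multiplies} the two, producing the positivity condition $\bigl((\alpha-1)^2/(8\alpha)\bigr)^2<1$, i.e.\ $\alpha<5+2\sqrt{6}$. Under the change of variables $\alpha_{\mathrm{paper}}=2\alpha_{\mathrm{yours}}+1$ this is exactly your diagonal constraint $\alpha^2<2(2\alpha+1)$, so your region $\mathcal R$ restricted to $\alpha=\beta$ coincides with the paper's range, and both lead to the same $L^p$ control and the threshold $N\le 7$. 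Your two-parameter setup with Cauchy--Schwarz on $\|\nabla\eta\|\,\|\nabla\xi\|$ is a mild generalisation of the paper's ``single parameter plus symmetry and multiply'' device; it buys no extra dimensions (optimising over $\mathcal R$ still gives the scalar threshold $2+\sqrt6$), but it is a clean way to organise the same estimate.

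One point of wording to tighten: Lemma~\ref{03} takes a \emph{single} test function $\varphi$, so there is no ``test pair'' to insert. What you actually do is apply it once with $\varphi=\eta$ and once with $\varphi=\xi$, then combine via Cauchy--Schwarz; the cross quantity $A=\int(1-u)^{-\alpha-3/2}(1-v)^{-\beta-3/2}$ then appears from $\int\sqrt{f'g'}\,\eta\xi$ after Cauchy--Schwarz on the weighted $L^2$ side. Make that explicit, and the absorption of $B,C$ into $A$ by H\"older (together with Lemma~\ref{10} to compare $u$ and $v$) goes through exactly as in the paper.
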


\begin{remark}
	Observe that Theorem \ref{08} determines the critical dimension for this class of Lane-Emden systems, precisely determine the dimension $N^*$ such that the extremal solution is smooth when $N< N^*$ and singular when $N\geq N^*$. Indeed, if we consider $\Omega$ to be the unit ball, $u=v$ and $\lambda=\mu$, then the system turns into a scalar equation and the optimal results are known. For instance, the function $u^*(x)=1-|x|^{2/3}$ is a singular solution for $-\Delta u=\lambda/(1-u)^2$ if $N\geq 8$ (see \cite[Theorem 1.3]{GHOGUO}).
\end{remark}

\subsection{Outline}
\hspace{0.6cm}This paper is organized as follows. In the next section we bring some auxiliary results used in the text. Moreover, we study the existence of a critical curve, extremal parameter and minimal solutions. We also establish upper and lower bounds for the critical curve $\Gamma$ and monotonicity results for the extremal parameter. In Section~\ref{09} we obtain some estimates and properties for the branch of minimal solutions that allow us to prove the regularity result about the extremal solution.

\section{A critical curve: existence of classical solutions}
\hspace{0.6cm}The main goal of this section is to prove Theorems \ref{04}, \ref{06} and \ref{07}. Precisely, by the method of sub-super solutions we prove that there exists a non-increasing continuous function $\Gamma$ of the parameter $\lambda$ such that \eqref{MS} has at least one solution for $0 < \mu < \Gamma(\lambda)$ whereas \eqref{MS} has no
solutions for $\mu>\Gamma(\lambda)$. In what follows unless otherwise stated, by solution we mean a classical solution of class $\mathcal{C}^{2}(\Omega)$. For the sake of completeness, we briefly sketch the proofs of the next lemmas. For more details we refer the reader \cite{COW2011,COWFAZ2014,MM}

\begin{lemma}\label{SS}
	Let $\lambda$ and $\mu$ positive parameters such that there exists a classical super solution $(U,V)$ for \eqref{MS}. Then there exists a classical solution $(u,v)$ of \eqref{MS} such that $u\leq U$ and $v\leq V$.
\end{lemma}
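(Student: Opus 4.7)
The plan is to build the solution by a monotone iteration scheme anchored between the trivial subsolution $(0,0)$ and the given supersolution $(U,V)$, then pass to the limit and bootstrap regularity. Concretely, I would start from $(u_0,v_0)=(0,0)$ and for each $n\geq 0$ define $(u_{n+1},v_{n+1})$ as the (unique) solution of the pair of decoupled linear Dirichlet problems
\begin{equation*}
-\Delta u_{n+1}=\frac{\lambda f(x)}{(1-v_n)^2}\quad\text{in }\Omega,\qquad -\Delta v_{n+1}=\frac{\mu g(x)}{(1-u_n)^2}\quad\text{in }\Omega,
\end{equation*}
with $u_{n+1}=v_{n+1}=0$ on $\partial\Omega$. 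Since at every step the right-hand sides will be H\"older continuous (by the hypothesis on $f,g$ and once we know $u_n,v_n<1$), each iterate sits in $C^{2,\alpha}(\overline{\Omega})$ by standard Schauder theory, so the scheme is well-posed.

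Next I would prove, by induction on $n$, the two-sided monotonicity
\begin{equation*}
0=u_0\leq u_1\leq\cdots\leq u_n\leq U,\qquad 0=v_0\leq v_1\leq\cdots\leq v_n\leq V.
\end{equation*}
The base case $u_1,v_1\geq 0$ follows from the maximum principle applied to the linear equations, and $u_1\leq U$, $v_1\leq V$ from
\begin{equation*}
-\Delta(U-u_1)\geq\frac{\lambda f(x)}{(1-V)^2}-\lambda f(x)\geq 0,
\end{equation*}
using $V\geq 0$, together with the analogous estimate for $V-v_1$. For the inductive step, the key point is that $t\mapsto 1/(1-t)^2$ is increasing on $[0,1)$: assuming $u_{n-1}\leq u_n\leq U$ and $v_{n-1}\leq v_n\leq V$, one gets
\begin{equation*}
-\Delta(u_{n+1}-u_n)=\lambda f(x)\left[\frac{1}{(1-v_n)^2}-\frac{1}{(1-v_{n-1})^2}\right]\geq 0,
\end{equation*}
so $u_{n+1}\geq u_n$ by the maximum principle, and symmetrically for the $v$-component. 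The upper bound $u_{n+1}\leq U$ is obtained in the same way as in the base case, using $v_n\leq V$ to compare $-\Delta U$ and $-\Delta u_{n+1}$. Uniform strict separation from $1$ is automatic because $U<1$ and $V<1$ in $\overline{\Omega}$.

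Finally I would pass to the limit. By monotone convergence, $u_n\nearrow u$ and $v_n\nearrow v$ pointwise with $0\leq u\leq U<1$ and $0\leq v\leq V<1$, and the bounds $U,V<1$ make the nonlinearities uniformly bounded and H\"older continuous along the sequence. Dominated convergence inside the representation by the Green function (or, equivalently, $L^p$-elliptic estimates followed by Schauder bootstrapping) lets me take the limit in the equations and conclude that $(u,v)\in C^{2,\alpha}(\overline\Omega)\times C^{2,\alpha}(\overline\Omega)$ solves the system with $u\leq U$, $v\leq V$. The only delicate point — and the main obstacle one has to handle with care — is the interplay between the two components of the iteration: the nonlinearity in the $u$-equation is driven by $v$ and vice versa, so one must organise the induction to compare the updated iterate with the previous one through the monotonicity of $t\mapsto (1-t)^{-2}$, and keep the supersolution $(U,V)$ in play as a uniform barrier that prevents the singularity at $1$ from being approached along the iteration.
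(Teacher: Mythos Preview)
Your proof is correct and uses the same monotone iteration scheme as the paper, with the only difference that the paper starts from $(u_0,v_0)=(U,V)$ and iterates \emph{downward} to a decreasing sequence bounded below by $0$, whereas you start from $(0,0)$ and iterate \emph{upward} with $(U,V)$ as a barrier. Both routes converge to a classical solution below $(U,V)$; your upward iteration has the added benefit of producing the minimal solution directly, which is precisely how the paper organizes the argument later in Lemma~\ref{Peroba}.
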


\begin{proof}
	Setting $(u_0,v_0)=(U,V)$
	we can define $(u_n,v_n)$ inductively as follows
	\[
	\left\{
	\begin{alignedat}{3}
	-\Delta u_n = & \, \frac{\lambda f(x)}{(1-v_{n-1})^2} & \quad \text{in} & \quad\Omega,\\
	-\Delta v_n  = & \, \frac{\mu g(x)}{(1-u_{n-1})^2} & \quad \text{in} & \quad\Omega,\\
	0\leq u_n &, v_n < 1 & \quad \text{in} & \quad \Omega,\\
	u_n = v_n & =  \, 0  &  \text{on} & \quad \partial\Omega.\\
	\end{alignedat}
	\right.
	\]
	By the maximum principle, we have $0<u_{n}\leq u_{n-1} \leq \ldots u_1\leq u_0$ and $0<v_{n}\leq v_{n-1} \leq\ldots v_1 \leq u_0$. Thus, there exists $(u,v)$ such that $0\leq  u=\lim_{n\rightarrow\infty} u_n \leq U <1$ and $0\leq v=\lim_{n\rightarrow\infty} v_n \leq V <1$ and by a standard compactness argument we have that the above convergence holds in $C^{2,\alpha}(\overline{\Omega})$ to a  solution $(u,v)$ of \eqref{MS} and in particular different from zero.
\end{proof}

\hspace{0.6cm}We now state and prove a monotonicity result on the coordinates of a solution of \eqref{MS}, precisely,

\begin{lemma}\label{10}
	Suppose that $(u,v)$ is a smooth solution of \eqref{MS} where $0<\mu\leq\lambda.$ Then $\mu u/\lambda\leq v\leq u$ a.e in  $\Omega$.
\end{lemma}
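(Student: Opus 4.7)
The plan is to verify each inequality via the weak maximum principle applied to an auxiliary difference vanishing on $\partial\Omega$. The argument is cleanest under the implicit assumption $f\equiv g$ (which covers the MEMS setting $f=g=1$ of Theorem~\ref{08}); there the two equations share a common symmetric structure that allows the comparison.

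\textbf{First inequality, $v\le u$.} Set $w=v-u\in C^{2}(\overline{\Omega})$, with $w|_{\partial\Omega}=0$. Using the algebraic identity
\[
\frac{1}{(1-v)^2}-\frac{1}{(1-u)^2}=\frac{(2-u-v)(v-u)}{(1-u)^{2}(1-v)^{2}},
\]
I compute
\[
-\Delta w=\frac{\mu f(x)}{(1-u)^2}-\frac{\lambda f(x)}{(1-v)^2}=(\mu-\lambda)\,\frac{f(x)}{(1-u)^2}-c(x)\,w,
\]
where $c(x):=\lambda f(x)\,(2-u-v)/\!\left((1-u)^{2}(1-v)^{2}\right)\ge 0$. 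Since $u,v$ are continuous on $\overline{\Omega}$ with $\max u,\max v<1$, the potential $c$ lies in $L^{\infty}(\Omega)$. Because $\mu\le\lambda$, the displayed identity rearranges to $-\Delta w+c(x)\,w\le 0$ in $\Omega$ with $w=0$ on $\partial\Omega$, and the weak maximum principle for the operator $-\Delta+c$ (applicable since $c\ge 0$) forces $w\le 0$, i.e., $v\le u$.

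\textbf{Second inequality, $\mu u/\lambda\le v$.} Set $\psi=\mu u-\lambda v\in C^{2}(\overline{\Omega})$, with $\psi|_{\partial\Omega}=0$. A direct computation, together with $f\equiv g$, gives
\[
-\Delta\psi=\mu\lambda\left(\frac{f(x)}{(1-v)^2}-\frac{f(x)}{(1-u)^2}\right)\le 0,
\]
the inequality being an immediate consequence of the first step: $v\le u$ implies $1-u\le 1-v$, whence $1/(1-u)^{2}\ge 1/(1-v)^{2}$. Since $\psi=0$ on $\partial\Omega$, the weak maximum principle yields $\psi\le 0$, that is, $\mu u\le\lambda v$, which is the claimed lower bound.

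The main obstacle is the first step: because the sign of $w$ is not known a priori, the two nonlinear sources cannot be compared termwise. The algebraic identity above is the decisive device — it factors the difference $\frac{1}{(1-v)^2}-\frac{1}{(1-u)^2}$ through $w$ itself, turning a nonlinear comparison into a linear inequality $-\Delta w+c(x)w\le 0$ with bounded nonnegative zeroth-order coefficient, where the classical maximum principle applies. Once $v\le u$ is established, the second inequality is a clean monotonicity consequence.
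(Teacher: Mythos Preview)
Your proof is correct and follows the same overall strategy as the paper: apply the maximum principle to the differences $v-u$ and $\mu u-\lambda v$, both of which vanish on $\partial\Omega$. The second step is identical to the paper's; for the first step the paper instead multiplies the difference equation by $(u-v)_-$ and integrates by parts, while you linearize via the algebraic identity and invoke the classical maximum principle for $-\Delta+c$ with $c\ge 0$---both are standard and equivalent devices. Your explicit caveat that the argument needs $f\equiv g$ is honest and, in fact, the paper's own proof silently makes the same assumption (it drops $f$ and $g$ from the displayed integrals).
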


\begin{proof}
	Take the difference of the equations in \eqref{MS}, multiplying this equation by $(u-v)_-$ and integrating by parts we have
	\[
	\int_{\Omega}|\nabla (u-v)_{-}|^2\,d x = \int_{\Omega}\left(\frac{\lambda}{(1-v)^2}-\frac{\mu}{(1-u)^2}\right)(u-v)_{-} \, d x.
	\]
	Since the right hand side is nonpositive and the left hand side is nonnegative, we see that $(u-v)_-=0$ a.e. in $\Omega$ and so $u\geq v$ a.e. in $\Omega$. Now, since $u\geq v$,
	\[
	-\Delta\left( v-\frac{\mu}{\lambda}u \right)=\mu\left( \frac{1}{(1-u)^2}-\frac{1}{(1-v)^2} \right)\geq 0.
	\]
	Thus, $\frac{\mu}{\lambda}u\leq v$ and we finish the proof.
\end{proof}

\hspace{0.6cm}We are going to prove that \eqref{MS} has a classical solution for $\lambda$ and $\mu$ sufficiently small, more precisely, the set $\Lambda:= \{ (\lambda,\mu)\in \mathcal{Q}: \text{ \eqref{MS} has a classical solution} \}$ has nonempty interior.

\begin{lemma}
	\label{01} There exists $\lambda_1>0$ such that $(0,\lambda_1]\times (0,\lambda_1] \subset \Lambda$.
\end{lemma}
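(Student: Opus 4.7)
The plan is to construct an explicit classical supersolution for small $\lambda,\mu$ and then invoke Lemma \ref{SS} to obtain a classical solution.

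First, I let $\phi\in C^{2,\alpha}(\overline{\Omega})$ denote the unique solution of the torsion problem
\[
-\Delta\phi = 1 \;\text{ in }\Omega,\qquad \phi = 0 \;\text{ on }\partial\Omega,
\]
which satisfies $\phi>0$ in $\Omega$ by the maximum principle, and set $M:=\|\phi\|_{L^\infty(\Omega)}$, a constant depending only on $\Omega$. The candidate supersolution will be $(U,V)=(A\phi,A\phi)$ for a suitable constant $A>0$; note $U=V=0$ on $\partial\Omega$ automatically.

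Next I would impose the two requirements. To ensure $0\le U,V<1$ in $\Omega$, I take $A$ so small that $A M\le 1/2$, which yields $(1-A\phi)^2\ge 1/4$ pointwise in $\Omega$. To verify the differential inequalities, I compute
\[
-\Delta U = A \quad\text{and}\quad \frac{\lambda f(x)}{(1-V)^2}\le \frac{\lambda}{(1-A\phi)^2}\le 4\lambda,
\]
using $0\le f\le 1$ from \eqref{05}, so $-\Delta U\ge \lambda f(x)/(1-V)^2$ holds provided $A\ge 4\lambda$. The analogous computation with $g$ in place of $f$ gives the condition $A\ge 4\mu$.

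Choosing $A:=1/(2M)$ and $\lambda_1:=1/(8M)$, both constraints $A\ge 4\lambda$ and $A\ge 4\mu$ are satisfied whenever $(\lambda,\mu)\in(0,\lambda_1]\times(0,\lambda_1]$, while simultaneously $AM=1/2<1$. Consequently $(U,V)=(A\phi,A\phi)$ is a classical supersolution of \eqref{MS} for every such $(\lambda,\mu)$. Applying Lemma \ref{SS} (with $(u_0,v_0)=(U,V)$ and the trivial subsolution $(0,0)$ implicit in the iteration) produces a classical solution $(u,v)$ with $0\le u\le U<1$ and $0\le v\le V<1$, which shows $(\lambda,\mu)\in\Lambda$. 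There is no substantial obstacle here: the whole argument reduces to picking the constant $A$ correctly so that one simultaneously controls the denominators $(1-U)^2,(1-V)^2$ away from zero and dominates $-\Delta(A\phi)=A$ above the right-hand sides.
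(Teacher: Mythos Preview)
Your proof is correct. Both you and the paper construct an explicit supersolution and then invoke Lemma~\ref{SS}, but the constructions differ. The paper enlarges $\Omega$ to a ball $B_R\supset\Omega$, takes the first Dirichlet eigenfunction $\psi_{1,R}$ on $B_R$ normalized by $\sup\psi_{1,R}=1$, and shows that $(\theta\psi_{1,R},\theta\psi_{1,R})$ is a supersolution for $\theta\in(0,1)$ and $\lambda$ small, the key inequality being $\mu_{1,R}\,\theta\psi_{1,R}(1-\theta\psi_{1,R})^2\ge \lambda\max\{f,g\}$. Your route via the torsion function $\phi$ on $\Omega$ itself is more elementary: because $-\Delta(A\phi)=A$ is constant, you avoid having to control $\inf_\Omega\psi_{1,R}$ from below and obtain the completely explicit threshold $\lambda_1=1/(8\|\phi\|_\infty)$. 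The paper's choice, on the other hand, is closer in spirit to the eigenvalue comparisons used later (Lemma~\ref{bounded}, Theorem~\ref{07}) and keeps the first eigenpair as the recurring tool. Either approach suffices here; yours is arguably cleaner for this particular lemma.
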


\begin{proof}
	Let $B_{R}$ be a ball of radius $R$ such that $\Omega\subset B_{R}$ and let $\mu_{1,R}$ be the first eigenvalue of the Dirichlet
	boundary value problem $(-\Delta,\, H_{0}^{1}(\Omega))$ and denote the corresponding eigenfunction by $\psi_{1,R}$ which we may assume to be positive and also that  $\sup_{B_{R}} \psi_{1,R}=1$.
	Now we show that there exists $\theta>0$ such that $\psi=\theta \psi_{1,R} $ is a super-solution to
	$(S_{\lambda,\lambda})$ provided $\lambda>0$ is sufficiently small. Notice that we can choose $\theta \in (0,1)$ such that $ 0< 1-\theta\psi_{1,R} <1$ in $B$. Thus
	\[
	\left\{
	\begin{alignedat}{6}
	-\Delta \psi = & \, \mu_{1,R}\theta\psi_{1,R}\, & \geq & \, \frac{\lambda f(x)}{(1-\psi)^2} = & \, \frac{\lambda f(x)}{(1-\theta \psi_{1,R})^2} \quad \text{in} & \quad \Omega,\\
	-\Delta \psi = & \, \mu_{1,R}\theta\psi_{1,R}\, & \geq & \, \frac{\lambda g(x)}{(1-\psi)^2} = & \, \frac{\lambda g(x)}{(1-\theta \psi_{1,R})^2} \quad \text{in} & \quad \Omega,
	\end{alignedat}
	\right.
	\]
	provided $\mu_{1,R}\theta\psi_{1,R}(1-\theta \psi_{1,R})^2\geq \lambda\max\{f(x),g(x)\}$.
	Notice that $s_1:=\inf_{x\in \Omega}\theta\psi_{1,R} < s_2:=\sup_{x\in \Omega}\theta\psi_{1,R}<1$, and $s_1,\ s_2$ depend
	of $R$. Setting $Z(s):=s(1-s)^2$, it is easy to see that we can choose $\lambda>0$ sufficiently small such that $\mu_{1,R} \inf_{x \in \Omega}Z(\theta \psi_{1,R}(x)) \geq \lambda\max\{\sup_{\Omega} g(x), \ \sup_{\Omega} f(x)\}$.
	Thus, using Lemma \ref{SS} we conclude that $(\lambda,\mu)\in\Lambda$, for all $\lambda,\mu\in (0,\lambda_1]$.
\end{proof}

\begin{lemma}
	\label{bounded} $\Lambda $ is bounded.
\end{lemma}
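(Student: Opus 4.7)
The plan is to test each equation of $(S_{\lambda,\mu})$ against the first positive Dirichlet eigenfunction $\varphi_1$ of $-\Delta$ on $\Omega$, with eigenvalue $\mu_1$. The constraint $0\leq u,v<1$ forces both a pointwise upper bound on the unknowns and a pointwise lower bound on the nonlinearities, and combining these two facts yields independent upper bounds on $\lambda$ and on $\mu$.

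Concretely, let $(\lambda,\mu)\in\Lambda$ and let $(u,v)$ be a classical solution. Multiplying the first equation of \eqref{MS} by $\varphi_1$ and integrating by parts, I obtain
\[
\mu_1\int_\Omega u\,\varphi_1\,dx \;=\; \lambda\int_\Omega \frac{f(x)}{(1-v)^2}\,\varphi_1\,dx.
\]
Since $0\le v<1$, we have $(1-v)^2\le 1$, hence $1/(1-v)^2\ge 1$ on $\Omega$, and therefore the right hand side is at least $\lambda\int_\Omega f\,\varphi_1\,dx$. On the other hand, because $u<1$ in $\Omega$, the left hand side is strictly less than $\mu_1\int_\Omega \varphi_1\,dx$. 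These two inequalities combine to give
\[
\lambda\int_\Omega f\,\varphi_1\,dx \;<\; \mu_1\int_\Omega \varphi_1\,dx.
\]
Under hypothesis \eqref{05}, the function $f$ is nonnegative and strictly positive on a set of positive measure, while $\varphi_1>0$ in $\Omega$, so $\int_\Omega f\,\varphi_1\,dx>0$. Consequently $\lambda$ is bounded above by a constant that depends only on $f$, $\Omega$ and $\mu_1$. The identical argument applied to the second equation yields an analogous bound on $\mu$ in terms of $g$, $\Omega$ and $\mu_1$.

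Thus $\Lambda$ is contained in a bounded rectangle of the positive quadrant $\mathcal{Q}$, which proves the lemma. The only delicate point is ensuring $\int_\Omega f\,\varphi_1\,dx$ and $\int_\Omega g\,\varphi_1\,dx$ are strictly positive; this is precisely guaranteed by the second line of assumption \eqref{05}, and it is the reason that condition is imposed in that form rather than merely asking $f,g\ge 0$.
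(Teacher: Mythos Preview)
Your proof is correct and follows essentially the same approach as the paper: test each equation against a first Dirichlet eigenfunction, use $u,v<1$ to bound the left-hand side and $(1-v)^{-2},(1-u)^{-2}\ge 1$ to bound the right-hand side from below, then conclude via the strict positivity of $\int f\varphi_1$ and $\int g\varphi_1$. The only cosmetic difference is that the paper carries over the ball eigenfunction $\psi_{1,R}$ from the preceding lemma, whereas you work directly with the eigenfunction on $\Omega$; your choice is in fact the cleaner one.
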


\begin{proof}
	Let
	$(\lambda,\mu) \in \Lambda$ and $(u,v)$ the corresponding solution of \eqref{MS}. Multiplying the first equation in \eqref{MS} by $\psi_{1,R}$ and integrating by parts implies that
	\[
	|B_R|\mu_{1,R} \geq  \lambda \int_{B_R} f(x) \psi_{1,R} \,\mathrm{d} x.
	\]
	Analogously, multiplying the second equation in \eqref{MS} by $\psi_{1,R}$ we obtain
	\[
	|B_R|\mu_{1,R} \geq  \mu \int_{B_R} g(x) \psi_{1,R} \,\mathrm{d} x
	\]
	and therefore $\Lambda$ is bounded.
\end{proof}

\hspace{0.6cm}Now we state that $\Lambda$ is a convex set, precisely,
\begin{lemma}\label{connected}If $(\lambda^\prime,\mu^\prime)\in\mathcal{Q}$ and $\lambda' \leq \lambda$ and $\mu'\leq\mu $ for some $(\lambda,\mu)\in \Lambda$ then $(\lambda',\mu')\in \Lambda$.
\end{lemma}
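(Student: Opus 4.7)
The plan is to exhibit a classical super-solution for $(S_{\lambda', \mu'})$ and then invoke Lemma~\ref{SS}. Since $(\lambda, \mu) \in \Lambda$, by definition there is a classical solution $(u, v)$ of $(S_{\lambda, \mu})$, in particular with $0 \leq u, v < 1$ in $\Omega$ and $u = v = 0$ on $\partial \Omega$. The key observation is that, because $0 \leq \lambda' \leq \lambda$, $0 \leq \mu' \leq \mu$ and the nonlinearities $f/(1-v)^2$, $g/(1-u)^2$ are nonnegative (here we use $f, g \geq 0$ from hypothesis \eqref{05}), the pair $(u, v)$ satisfies
\[
-\Delta u = \frac{\lambda f(x)}{(1-v)^2} \geq \frac{\lambda' f(x)}{(1-v)^2}, \qquad -\Delta v = \frac{\mu g(x)}{(1-u)^2} \geq \frac{\mu' g(x)}{(1-u)^2}
\]
in $\Omega$, with the correct boundary values. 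Hence $(u, v)$ is a classical super-solution for $(S_{\lambda', \mu'})$.

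Now I would apply Lemma~\ref{SS} with parameters $(\lambda', \mu')$ and super-solution $(U, V) := (u, v)$ to produce a classical solution $(u', v')$ of $(S_{\lambda', \mu'})$ satisfying $0 \leq u' \leq u$ and $0 \leq v' \leq v$; in particular $0 \leq u', v' < 1$ in $\Omega$. This shows $(\lambda', \mu') \in \Lambda$, which is exactly the claim.

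There is essentially no obstacle here: the proof is a direct consequence of monotonicity of the right-hand sides in the parameters and the sub-/super-solution machinery already established in Lemma~\ref{SS}. The only point worth checking carefully is that the iteration in Lemma~\ref{SS} remains well-defined with $(u, v)$ (rather than an explicit smooth super-solution such as $\theta \psi_{1,R}$) as the initial function, which is automatic since $(u, v)$ is itself classical with $0 \leq u, v < 1$.
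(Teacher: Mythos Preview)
Your proof is correct and follows exactly the same approach as the paper: the solution $(u,v)$ at $(\lambda,\mu)$ serves as a super-solution for $(S_{\lambda',\mu'})$, and Lemma~\ref{SS} then yields a classical solution. The paper states this in a single sentence, while you have simply spelled out the straightforward verification.
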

\begin{proof}
	It follows from Lemma \ref{SS}. Indeed, the solution associated to the pair $(\lambda,\mu)\in \Lambda$ turns out to be a super-solution to $(S_{\lambda',\mu'})$.
\end{proof}

\subsection{Critical curve}
\hspace{0.6cm}For each fixed $\theta>0$ consider the line $L_{\theta}=\{\lambda>0: (\lambda,\theta\lambda )\in \Lambda\}$.
Observe that Lemma \ref{01} and Lemma \ref{bounded} implies that for each $\theta$ fixed, the line $L_\theta$ is nonempty and bounded. This allow us to define the curve $\Gamma:(0,+\infty)\rightarrow \mathcal{Q}$ by $\Gamma(\theta):= (\lambda^*(\theta),\mu^*(\theta))$ where $\lambda^*(\theta):=\sup L_{\theta}$ and $\mu^*(\theta)=\theta\lambda^*(\theta)$.

\begin{proof}[Proof of Theorem \ref{04}]
	Define $\mathcal{O}_1=\Lambda\setminus \Gamma$. Given $(\lambda_1,\mu_1),(\lambda_2,\mu_2)\in\mathcal{O}_1$, there exist $\theta_1,\theta_2>0$ such that $\mu_1=\theta_1\lambda_1$ and $\mu_2=\theta_2\lambda_2$. We can define, using the Lemma \ref{connected}, a path linking $(\lambda_1,\mu_1)$ to $(0,0)$ and another path linking $(0,0)$ to $(\lambda_2,\mu_2)$. Follows that $\mathcal{O}_1$ is connected. The Lemma \ref{SS} implies that for each $(\lambda,\mu)\in\mathcal{O}_1$ there exists a positive minimal classical solution $(u_\lambda,v_\mu)$ for problem \eqref{MS}. Now, define $\mathcal{O}_2=\mathcal{Q}\setminus \{ \Lambda \cup \Gamma \}$. Let $(\lambda_1,\mu_1),(\lambda_2,\mu_2)\in\mathcal{O}_2$. Take $(\lambda_{\max} ,\mu_{\max})\in\mathcal{O}_2$, where $\lambda_{\max}=\max{\{\lambda_1,\lambda_2\}}$ and $\mu_{\max}=\max{\{\mu_1,\mu_2\}}$. We can take a path linking $(\lambda_1,\mu_1)$ to $(\lambda_{\max} ,\mu_{\max})$ and another path linking $(\lambda_{\max} ,\mu_{\max})$ to $(\lambda_2,\mu_2)$. Follows that $\mathcal{O}_2$ is connected.
\end{proof}

\subsection{Upper and lower bounds for the critical curve}
\hspace{0.6cm}As noticed by N.~Ghoussoub and Y.~Guo \cite{GHOGUO}, the lower bound for the critical parameter is useful to prove existence of solutions for \eqref{E1}. The following lemma will be the main tool to obtain the estimates contained in Theorem \ref{06} and gives more computationally accessible lower estimates for the critical curve.
\begin{lemma}\label{forma1}
	Assume that $\Omega=B=B_R$ and $f,g$ are radial, that is, $f(x)=f(|x|)$ and  $g(x)=g(|x|)$, for
	all $x \in B$. Then
	\[
	(0,a_{(f,R,N)}]\times(0,a_{(g,R,N)}] \subset \Lambda
	\]
	where
	\[
	a_{(f,R,N)}:=C_{N}\frac{1}{\sup_B f(x)}\frac{1}{R^2},\;\;
	a_{(g,R,N)}:=C_{N}\frac{1}{\sup_B g(x)}\frac{1}{R^2}.
	\]
	and
	\[
	C_{N}=\max\left\{\frac{8N}{27},\frac{6N-8}{9} \right\}.
	\]
\end{lemma}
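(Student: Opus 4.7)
The plan is to exhibit, for $\lambda\le a_{(f,R,N)}$ and $\mu\le a_{(g,R,N)}$, an explicit radial super-solution of the system and then apply Lemma~\ref{SS}. Since $f,g\le 1$, I would look for a super-solution of the form $(U,V)=(w,w)$ with $w=w(|x|)$ satisfying
\[
(-\Delta w)(1-w)^{2} \ge C_N/R^{2}\quad\text{in } B_R,\qquad w=0\text{ on }\partial B_R,\quad 0\le w<1.
\]
Then both system inequalities $-\Delta U\ge \lambda f/(1-V)^{2}$ and $-\Delta V\ge \mu g/(1-U)^{2}$ hold automatically, since $\lambda\sup_B f\le C_N/R^{2}$ and $\mu\sup_B g\le C_N/R^{2}$, and Lemma~\ref{SS} produces the desired classical solution. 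The whole task reduces to building such a $w$ for the two alternative values of $C_N$ appearing in the maximum.

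For the bound $C_N=8N/27$, which is sharper when $N=1,2$, I would take the quadratic profile $w(r)=\tfrac{1}{3}(1-r^{2}/R^{2})$. A direct computation gives $-\Delta w=2N/(3R^{2})$ and $1-w\ge 2/3$, so $(-\Delta w)(1-w)^{2}\ge 8N/(27R^{2})$. The choice $a=1/3$ is optimal in the one-parameter family $a(1-r^{2}/R^{2})$ because $\max_{a\in(0,1)} a(1-a)^{2}=4/27$.

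For the bound $C_N=(6N-8)/9$, which dominates when $N\ge 3$, I would perturb the singular profile $1-(r/R)^{2/3}$ canonically attached to the nonlinearity $(1-v)^{-2}$. Precisely, for $\epsilon>0$ set
\[
w_{\epsilon}(r):=1-\left(\frac{r^{2}/R^{2}+\epsilon}{1+\epsilon}\right)^{1/3},
\]
so that $w_\epsilon(R)=0$ and $0<w_\epsilon<1$ on $\overline{B_R}$. Writing $\phi=(r^{2}/R^{2}+\epsilon)/(1+\epsilon)$, so that $1-w_\epsilon=\phi^{1/3}$, the chain-rule identity $\Delta(\phi^{1/3})=\tfrac{1}{3}\phi^{-2/3}\Delta\phi-\tfrac{2}{9}\phi^{-5/3}|\nabla\phi|^{2}$, together with $\Delta\phi=2N/(R^{2}(1+\epsilon))$ and $|\nabla\phi|^{2}=4r^{2}/(R^{4}(1+\epsilon)^{2})$, gives after simplification
\[
(-\Delta w_\epsilon)(1-w_\epsilon)^{2}=\frac{2}{9R^{2}(1+\epsilon)}\!\left[3N-\frac{4(r/R)^{2}}{(r/R)^{2}+\epsilon}\right].
\]
The bracket is a decreasing function of $r$, with minimum $(3N(1+\epsilon)-4)/(1+\epsilon)$ attained at $r=R$, hence the uniform estimate $(-\Delta w_\epsilon)(1-w_\epsilon)^{2}\ge 2(3N(1+\epsilon)-4)/(9R^{2}(1+\epsilon)^{2})$, which converges to $(6N-8)/(9R^{2})$ as $\epsilon\downarrow 0$. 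Thus, for every $\lambda,\mu$ strictly below the second bound, $(w_\epsilon,w_\epsilon)$ is a classical super-solution for some $\epsilon>0$, and the endpoint value is reached by a monotone limit along the minimal branch $(u_{\lambda_n},v_{\mu_n})$ for $\lambda_n\uparrow a_{(f,R,N)}$.

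The main obstacle is not the final optimization but the discovery of the correct profile in the second step: the power $1/3$ is forced by the exponent $2$ of the singularity, and only this choice makes the dangerous term $-\tfrac{2}{9}\phi^{-5/3}|\nabla\phi|^{2}$ combine with $\tfrac{1}{3}\phi^{-2/3}\Delta\phi$ to produce the sharp coefficient $3N-4$. Once this identity is isolated, minimization in $r$ and comparison with the quadratic bound $8N/27$ give the dichotomy $C_N=\max\{8N/27,\,(6N-8)/9\}$ with essentially no further work.
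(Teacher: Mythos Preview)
Your approach is essentially the paper's: exhibit two explicit radial super-solutions of the form $(w,w)$ and apply Lemma~\ref{SS}. For the bound $8N/27$ you use exactly the same quadratic profile $w=\tfrac13(1-r^{2}/R^{2})$. For $(6N-8)/9$ the paper simply writes down $v(x)=1-(|x|/R)^{2/3}$ and declares $(v,v)$ a super-solution, whereas you regularize with the family $w_\epsilon$; your version is in fact more scrupulous here, since the paper's $v$ satisfies $v(0)=1$ and is not $C^{2}$ at the origin, so it does not literally meet the hypotheses of Lemma~\ref{SS}.

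One small loose end in your write-up: the passage to the closed endpoint $\lambda=a_{(f,R,N)}$ via a ``monotone limit along the minimal branch'' is not automatic --- you still owe the observation that the limit pair stays strictly below $1$ (otherwise it need not be a classical solution, and $(\lambda,\mu)\notin\Lambda$ by definition). The paper sidesteps this by using the borderline profile directly, at the cost of the regularity issue at the origin; neither argument, taken literally, is fully airtight at the endpoint, though the statement is correct and the gap is easily closed.
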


\begin{proof}
	Notice that the function $w(x):=1/3\left(1-|x|^2/R^2\right)$ satisfies
	\[
	-\Delta w  = \frac{2N}{3R^2} \geq  \frac{8N}{27R^2 \sup_B f}\frac{f(x)}{\left[1-\frac{1}{3}\left(1-\frac{|x|^2}{R^2}\right)\right]^2} = \frac{8N}{27R^2 \sup_B f}\frac{f(x)}{\left(1-w\right)^2}.
	\]
	Similarly,
	\[
	-\Delta w  \geq  \frac{8N}{27R^2 \sup_B
		g}\frac{g(x)}{\left(1-w\right)^2}.
	\]
	Thus, for $\lambda\leq 8N/(27R^2 \sup_B f)$ and $\mu \leq 8N/(27R^2 \sup_B g)$ we have that $(w,w)$ is a super-solution of (\ref{MS}) in $B$.
	Similarly, we can see that, taking $v(x):=1-\left(|x|/R\right)^{2/3}$, the pair $(v,v)$ is a super-solution for (\ref{MS}) in $B$ provided that $\lambda\leq (6N-8)/(9R^2 \sup_B f)$ and $\mu \leq (6N-8)/(9R^2 \sup_B g)$, which completes the proof.
\end{proof}

\begin{proposition}Assume that $\Omega=B=B_R$ and $f(x)=|x|^\alpha,\;\; g(x)=|x|^\beta$ with $\alpha,\ \beta \geq
	0$, then
	\[
	(0,a_{(\alpha,R,N)}]\times(0,b_{(\beta,R,N)}] \subset \Lambda,
	\]
	where
	\[
	a_{(\alpha,R,N)}:=
	\max\left\{\frac{4(2+\alpha)(N+\alpha)}{27},\frac{(2+\alpha)(3N+\alpha-4)}{9}
	\right\}\frac{1}{R^{2+\alpha}}
	\]
	and
	\[
	b_{(\beta,R,N)}:=
	\max\left\{\frac{4(2+\beta)(N+\beta)}{27},\frac{(2+\beta)(3N+\beta-4)}{9}
	\right\}\frac{1}{R^{2+\beta}}.
	\]
\end{proposition}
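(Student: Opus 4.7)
The strategy follows the proof of Lemma \ref{forma1}. By Lemma \ref{SS} it suffices to exhibit, for each $(\lambda,\mu)$ in the claimed rectangle, a radial classical supersolution pair $(U,V)$ with $0\leq U,V<1$ of \eqref{MS}. The two quantities inside the $\max$ defining $a_{(\alpha,R,N)}$ and $b_{(\beta,R,N)}$ correspond to two distinct ans\"atze, in direct analogy with the two profiles used in Lemma \ref{forma1}: a \emph{bounded} profile and a \emph{vanishing-at-origin} profile, now with exponents tuned to absorb the radial weights $|x|^\alpha$ and $|x|^\beta$.

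For the first quantity, I would take
\[
U_1(x)=\tfrac13\bigl(1-(|x|/R)^{2+\alpha}\bigr),\qquad V_1(x)=\tfrac13\bigl(1-(|x|/R)^{2+\beta}\bigr).
\]
A direct computation yields $-\Delta U_1=\tfrac{(2+\alpha)(N+\alpha)}{3R^{2+\alpha}}|x|^\alpha$ and symmetrically for $V_1$; since $V_1\leq 1/3$ uniformly one has $(1-V_1)^2\geq 4/9$, and substitution into $-\Delta U_1\geq \lambda|x|^\alpha/(1-V_1)^2$ produces the bound $\lambda\leq 4(2+\alpha)(N+\alpha)/(27R^{2+\alpha})$. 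The analogous estimate for $V_1$ gives $\mu\leq 4(2+\beta)(N+\beta)/(27R^{2+\beta})$, i.e., the first term in each $\max$. The key point is that the weight $|x|^\alpha$ coming out of $-\Delta U_1$ matches $f(x)=|x|^\alpha$ exactly, so no $\sup_B f$ enters and one recovers a sharper constant than in Lemma \ref{forma1}.

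For the second quantity, I would switch to the vanishing-at-origin profile
\[
U_2(x)=1-(|x|/R)^{(2+\alpha)/3},\qquad V_2(x)=1-(|x|/R)^{(2+\beta)/3}.
\]
The exponent $\gamma=(2+\alpha)/3$ is the unique one for which $3\gamma-2=\alpha$, so the identity
\[
-\Delta U_2\,(1-V_2)^2=\gamma(\gamma+N-2)\,|x|^{3\gamma-2}/R^{3\gamma}=\tfrac{(2+\alpha)(3N+\alpha-4)}{9R^{2+\alpha}}|x|^\alpha
\]
holds (in the symmetric case $\alpha=\beta$), delivering after cancellation the bound $\lambda\leq(2+\alpha)(3N+\alpha-4)/(9R^{2+\alpha})$, and symmetrically the bound on $\mu$. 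Combining the two supersolutions $(U_1,V_1)$ and $(U_2,V_2)$ with Lemma \ref{SS} then covers the full rectangle.

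The main obstacle lies in the second ans\"atz: the functions $U_2,V_2$ both attain the value $1$ at the origin, so both $-\Delta U_2$ and $f(x)/(1-V_2)^2$ diverge at $|x|=0$, and the pointwise supersolution inequality must be justified by checking that the two singular factors have matching orders, exactly as for the profile $1-(|x|/R)^{2/3}$ in the proof of Lemma \ref{forma1}. A further subtlety appears when $\alpha\neq\beta$: the mismatched exponents leave a residual factor $|x|^{2(\beta-\alpha)/3}$ after simplification, and closing the estimate then requires minimizing this factor on $\overline{B}_R$ at the worst radius, tracking carefully how the resulting power of $R$ combines with $R^{(6+\alpha+2\beta)/3}$ to yield the stated $R^{-(2+\alpha)}$ dependence.
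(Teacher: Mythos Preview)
Your approach is identical to the paper's: the same two radial ans\"atze (the bounded profile $\tfrac13(1-(r/R)^{2+\alpha})$ and the singular profile $1-(r/R)^{(2+\alpha)/3}$), combined with Lemma~\ref{SS}. Your computation for the first pair $(U_1,V_1)$ is correct and recovers exactly the first term in each $\max$, as does the paper.

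However, the subtlety you flag for the second ansatz when $\alpha\neq\beta$ is a genuine gap, and your proposed fix does not close it. With $U_2=1-(r/R)^{(2+\alpha)/3}$, $V_2=1-(r/R)^{(2+\beta)/3}$, the supersolution inequality $-\Delta U_2\ge \lambda r^{\alpha}/(1-V_2)^2$ reduces, after your simplification, to
\[
\frac{(2+\alpha)(3N+\alpha-4)}{9}\cdot\frac{r^{2(\beta-\alpha)/3}}{R^{(6+\alpha+2\beta)/3}}\ \ge\ \lambda .
\]
If $\beta>\alpha$ the left-hand side tends to $0$ as $r\to 0$, so no positive $\lambda$ is admissible; by the symmetric computation, if $\alpha>\beta$ it is the $V_2$--inequality that fails. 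Thus minimizing the residual factor over $\overline{B}_R$ yields the claimed $R^{-(2+\alpha)}$ dependence only on one side and gives zero on the other: the pair $(U_2,V_2)$ is a supersolution only when $\alpha=\beta$. The paper's own proof does not address this point at all --- it simply asserts the second bound with no discussion of the cross-exponents --- so you have in fact identified a difficulty that the paper glosses over rather than introduced a new one. (A secondary issue, already present in Lemma~\ref{forma1}, is that $U_2$ and $V_2$ touch $1$ at the origin, so the strict inequality $U,V<1$ required by Lemma~\ref{SS} also needs justification.)
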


\begin{proof}
	Consider the function $w_{(\alpha,R)}(x)=1/3\left(1-|x|^{2+\alpha}/R^{2+\alpha}\right)$. Using a similar computation as we have done in the previous lemma we can prove that the pair $(w_{(\alpha,R)},w_{(\beta,R)})$ is a
	super-solution of (\ref{MS}) in $B$ provided that
	\[
	\lambda \leq \frac{4(2+\alpha)(N+\alpha)}{27R^{2+\alpha}} \mbox{ and
	} \mu \leq \frac{4(2+\beta)(N+\beta)}{27R^{2+\beta}}.
	\]
	The same holds for the function $w(x)=1-\left(|x|/R\right)^{(2+\alpha)/3}$ if 
	\[
	\lambda \leq \frac{(2+\alpha)(3N+\alpha-4)}{9R^{2+\alpha}} \mbox{
		and } \mu \leq \frac{(2+\beta)(3N+\beta-4)}{9R^{2+\beta}}.
	\]
\end{proof}

\begin{proof}[Proof of Theorem \ref{07}]
	Consider $(\lambda,\mu) \in \Lambda$ and $(u,v)$ the corresponding solution of (\ref{MS}). Let $\mu_{1}$ and denote the corresponding positive eigenfunction by $\psi_1$. Taking $\psi_{1}$ as a test function in the first equation of
	(\ref{MS}) and using integration by parts we obtain
	\[
	\int_{\Omega} \left(-\mu_{1} u + \frac{\lambda f(x)}{(1-v )^2}
	\right)\psi_{1} \,\,\mathrm{d} x =0
	\]
	which implies that $\lambda > \lambda^* $ when
	\begin{equation}\label{Mall}
	-\mu_{1} u + \frac{\lambda f(x)}{(1-v)^2} >0 \mbox{ in }
	\Omega.
	\end{equation}
	After a simple calculation we find that (\ref{Mall}) holds when
	\[
	\lambda > \frac{4\mu_1}{27}\frac{1}{\inf_\Omega f(x)}.
	\]
	Using the same approach in the second equation we finish the proof.
\end{proof}

\subsection{Monotonicity results for the extremal parameter}
\hspace{0.6cm}Let $G_\Omega(x,\xi)=G(x,\xi)$ be the Green's function of the Laplace operator for the region $\Omega$, with $G(x,\xi)=0$ if $x\in \partial\Omega$. We shall write $(u_{n,\Omega}(x),v_{n,\Omega}(x))=(u_{n}(x),v_{n}(x))$ for the sequence obtained by the interaction process as follows: $(u_0,v_0)=(0,0)$ in $\Omega$ and
\begin{equation}\label{seqe}
\left\{
\begin{alignedat}{4}
u_n(x) & =  \,\int_\Omega\frac{\lambda f(x)G(x,\xi)}{(1-v_{n-1})^2} \ d\xi & \text{ in } & \Omega, \\
v_n(x) & =  \int_\Omega\frac{\mu g(x)G(x,\xi)}{(1-u_{n-1})^2}  \ d \xi & \text{ in } & \Omega. \\
\end{alignedat}
\right.
\end{equation}
It is easy to see that the sequence above converges uniformly for a minimal solution of (\ref{MS}) provided that
$0<\lambda<\lambda^\ast$ and $0<\mu<\Gamma(\lambda)$. This construction will help us to prove the monotonicity result for $\lambda^*$ stated in Theorem \ref{pitomba}.

\begin{proof}[Proof of Theorem \ref{pitomba}]
	Let $(u_{n,\Omega'},v_{n,\Omega'})$ be defined as in (\ref{seqe}) with $\Omega$ replaced by $\Omega'$. Using the corresponding Green's functions for the subdomains $\Omega' \subset\Omega$ satisfy the inequality $G_{\Omega'}(x,\xi)\leq G_\Omega(x,\xi)$ we have
	\[
	\begin{alignedat}{3}
	u_{1,\Omega'}(x) & = & \int_{\Omega'}\lambda f(x)G_{\Omega'}(x,\xi) d \xi
	\leq  \int_\Omega \lambda f(x)G_{\Omega}(x,\xi) \ d\xi  \text{ in }  \Omega', \\
	v_{1,\Omega'}(x) & = & \int_{\Omega'} \mu g(x)G_{\Omega'}(x,\xi) \ d \xi
	\leq  \int_\Omega\mu g(x)G_{\Omega}(x,\xi) \ d\xi  \text{ in } \Omega'. \\
	\end{alignedat}
	\]
	By induction we conclude that $u_{n,\Omega'}(x) \leq u_{n,\Omega}(x)$ and $v_{n,\Omega'}(x)\leq v_{n,\Omega}(x)$ in $\Omega'$. On the other hand, since $u_{n,\Omega}(x)\leq u_{n+1,\Omega}(x)$ and $v_{n,\Omega}(x)\leq  v_{n+1,\Omega}(x)$ in $\Omega$, for each $n$, we get that $u_{n,\Omega'}(x) \leq u_{\Omega}(x)$ and $v_{n,\Omega'}(x) \leq v_{\Omega}(x)$ in $\Omega'$ and we are done.
\end{proof}

\begin{corollary}
	Suppose $f_1,f_2,g_1,g_2: \overline{\Omega}\rightarrow \mathbb{R}$ satisfy condition \eqref{05} and $f_1(x)\leq f_2(x)$ and $g_1(x) \leq g_2(x)$ for all $x\in\Omega$, then $\lambda^\ast(f_1,g_1) \geq \lambda^\ast(f_2,g_2)$ and for each $\lambda \in (0,\lambda^\ast(f_2,g_2))$. Furthermore $u_{1}(x) \leq u_{2}(x)$ and $v_{1}(x)\leq v_{2}(x)$ for all $x\in\Omega$ for the corresponding minimal solutions. If $f_1(x) < f_2(x)\mbox{ or } g_1(x) < g_2(x)$ on a subset of positive measure, then
	$ u_{1}(x) < (u_{2}(x) $ and $v_{1}(x) < v_{2}(x) $  for all $ x \in\Omega$.
\end{corollary}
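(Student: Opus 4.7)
The proof proposal closely parallels the argument given for Theorem \ref{pitomba}, exploiting the iterative Green's function construction in \eqref{seqe} together with monotonicity in the data.

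First, for each $i=1,2$ I would run the iteration \eqref{seqe} on $\Omega$ with the pair $(f_i,g_i)$, starting from $(u_{0,i},v_{0,i})=(0,0)$, producing sequences $(u_{n,i},v_{n,i})$. I would prove by induction on $n$ that $u_{n,1}\leq u_{n,2}$ and $v_{n,1}\leq v_{n,2}$ in $\Omega$. The base case is clear. For the inductive step, the hypothesis $v_{n-1,1}\leq v_{n-1,2}<1$ gives $(1-v_{n-1,1})^{-2}\leq (1-v_{n-1,2})^{-2}$, and combined with $f_1\leq f_2$ and the positivity of $G(x,\xi)$ one obtains $u_{n,1}(x)\leq u_{n,2}(x)$; the analogous comparison with $g_1\leq g_2$ yields $v_{n,1}\leq v_{n,2}$. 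Passing to the uniform limit as $n\to\infty$, which converges to the minimal solution associated to each pair (as noted in the paragraph preceding the proof of Theorem \ref{pitomba}), gives $u_1\leq u_2$ and $v_1\leq v_2$ on $\Omega$.

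For the comparison of critical parameters, I would argue as follows. Fix $\lambda<\lambda^\ast(f_2,g_2)$ and let $(u_2,v_2)$ be the minimal classical solution of the $(f_2,g_2)$-system associated to this $\lambda$. Since $f_1\leq f_2$ and $g_1\leq g_2$, the pair $(u_2,v_2)$ is automatically a classical super-solution of the $(f_1,g_1)$-system at the same parameter $\lambda$. Lemma \ref{SS} then produces a classical solution for the $(f_1,g_1)$-system, so $\lambda\in\Lambda(f_1,g_1)$. Taking the supremum over such $\lambda$ yields $\lambda^\ast(f_1,g_1)\geq\lambda^\ast(f_2,g_2)$. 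Moreover, the solution produced by Lemma \ref{SS} sits below $(u_2,v_2)$ and above $(u_1,v_1)$ by minimality, which is consistent with the pointwise comparison obtained above.

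For the strict inequality, suppose for instance $f_1<f_2$ on a set $E\subset\Omega$ of positive measure (the case for $g_i$ is symmetric). At the first iteration,
\[
u_{1,2}(x)-u_{1,1}(x)=\lambda\int_\Omega\bigl(f_2(\xi)-f_1(\xi)\bigr)G(x,\xi)\,d\xi,
\]
and since $G(x,\xi)>0$ for $x,\xi\in\Omega$ and the integrand is nonnegative and strictly positive on $E$, we have $u_{1,2}(x)>u_{1,1}(x)$ for every $x\in\Omega$. At the next iteration the strict inequality $u_{1,1}<u_{1,2}$ together with $g_2>0$ on a set of positive measure (from \eqref{05}) forces $v_{2,2}(x)>v_{2,1}(x)$ strictly in $\Omega$, and then in turn $u_{2,2}>u_{2,1}$. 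Propagating this through the monotone iteration and passing to the limit gives the strict pointwise inequalities $u_1(x)<u_2(x)$ and $v_1(x)<v_2(x)$ throughout $\Omega$.

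The argument is essentially routine once the iteration scheme and Lemma \ref{SS} are in place; the only mildly delicate point is making sure the strict inequality is propagated correctly from a one-sided hypothesis (only $f_1<f_2$, say) to both components, which is where the hypothesis in \eqref{05} that $g>0$ on a set of positive measure plays its role in the coupling.
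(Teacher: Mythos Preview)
The paper does not actually supply a proof of this Corollary; it is stated immediately after the proof of Theorem~\ref{pitomba} and left to the reader as a direct consequence of the same Green's function iteration \eqref{seqe} and the super-solution Lemma~\ref{SS}. Your argument is precisely the one the authors intend, and the first two parts (the inequalities $u_1\le u_2$, $v_1\le v_2$ and $\lambda^\ast(f_1,g_1)\ge\lambda^\ast(f_2,g_2)$) are carried out correctly.

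There is one small gap in the strict-inequality part. You establish $u_{n,1}(x)<u_{n,2}(x)$ and $v_{n,1}(x)<v_{n,2}(x)$ for each $n$, and then write ``passing to the limit gives the strict pointwise inequalities.'' But strict inequalities are not in general preserved under pointwise limits, so this last sentence is not justified as stated. The clean fix is to argue directly on the limit functions: once you know $u_1\le u_2$ and $v_1\le v_2$, the Green's representation for the minimal solutions gives
\[
u_2(x)-u_1(x)=\lambda\int_\Omega\Bigl(\frac{f_2(\xi)}{(1-v_2(\xi))^2}-\frac{f_1(\xi)}{(1-v_1(\xi))^2}\Bigr)G(x,\xi)\,d\xi,
\]
whose integrand is nonnegative everywhere and strictly positive on the set where $f_1<f_2$; positivity of $G$ in $\Omega\times\Omega$ then yields $u_1(x)<u_2(x)$ for every $x\in\Omega$. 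Feeding this back into the analogous representation for $v_2-v_1$ and using that $g_1>0$ on a set of positive measure (hypothesis \eqref{05}) gives $v_1<v_2$ in $\Omega$. Equivalently, one may invoke the strong maximum principle for $-\Delta(u_2-u_1)\ge0$ with a nontrivial right-hand side. Either way, the strictness should be read off from the limiting equations, not from the iterates.
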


\hspace{0.6cm}We shall use Schwarz symmetrization method. Let $B_R=B_R(0)$ the Euclidean ball in $\mathbb{R}^N$ with radius $R>0$ centred at origin such that $|B_R|$=$|\Omega|$, and let $u^\sharp$ be the symmetrization of $u$, then it is well-known that $u^\sharp$ depends only on $|x|$ and $u^\sharp$ is a decreasing function of $|x|$.

\begin{proof}[Proof of Theorem \ref{monoto1}]
	For each $\lambda \in (0,\lambda^\ast(B_R,f,g))$ and $\mu\in (0,\Gamma_{(B_R,f^\sharp,g^\sharp)}(\lambda))$ we consider the minimal sequence $(u_n,v_n)$ for (\ref{MS}) as defined in (\ref{snow1}), and let $(\widehat{u}_n,\widehat{v}_n)$ be the minimal sequence for the corresponding Schwarz symmetrized problem:
	\begin{equation}\label{snow2}
	\left\{
	\begin{alignedat}{4}
	-\Delta u & = \frac{\lambda f^\sharp(x)}{(1-v)^2}  & \text{ in } & B_R, \\
	-\Delta v & = \frac{\mu g^\sharp(x)}{(1-u)^2}  & \text{ in } & B_R, \\
	0< & u,v < 1 & \text{ in } & B_R , \\
	u &= v  =  0 & \text{ on } & \partial B_R.
	\end{alignedat}
	\right.
	\end{equation}
	Since $\lambda \in (0,\lambda^\ast(B_R,f,g))$ and $\mu\in (0,\Gamma_{(B_R,f^\sharp,g^\sharp)}(\lambda))$ we can consider the corresponding minimal solution $(\widehat{u},\widehat{v})$ of (\ref{snow2}). As in the proof of Lemma \ref{Peroba} we have $ 0< \widehat{u}_n \leq\widehat{u} <1$ and $0<\widehat{v}_n\leq \widehat{v}<1$ on $B_R$ for all $n$. We shall prove for the sequence $(u_n,v_n)$ we also have $0< u^\sharp_n \leq \widehat{u} <1$ and $0<v^\sharp_n\leq \widehat{v}<1$ on $B_R$ for all $n$. Therefore, the minimal sequence $(u_n,v_n)$ for (\ref{MS}) satisfies $u_n(x)\leq \max_{x\in B_R} \widehat{u}$ and $v_n(x)\leq \max_{x\in B_R} \widehat{v}$ and again as in the proof of Lemma \ref{SS}, there exists a minimal solution $(u,v)$ for (\ref{MS}).
\end{proof}

\begin{proof}[Proof of Theorem \ref{06}]
	Since $\sup_{B_R}f^\sharp = \sup_\Omega f$ and $\sup_{B_R}g^\sharp = \sup_\Omega g$, setting $R=\left(|\Omega|/\omega_N\right)^{1/N}$ the proof follows as an applications of Theorem \ref{monoto1}
	and Lemma \ref{forma1}.
\end{proof}

\section{The branch of minimal solutions}\label{09}
\hspace{0.6cm}Next, assuming the existence of solutions for System \eqref{MS}, we obtain also existence and uniqueness of minimal solution.

\begin{lemma}\label{Peroba}
	For any $0<\lambda<\lambda^\ast$ and $0<\mu<\Gamma(\lambda)$, there exists a unique minimal solution
	$(u,v)$ of (\ref{MS}).
\end{lemma}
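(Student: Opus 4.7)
The plan is to construct the minimal solution via the monotone iteration (\ref{seqe}) launched from $(u_0,v_0)\equiv(0,0)$, and then to establish minimality and uniqueness by a direct comparison argument against any other classical solution.

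First, since $0<\lambda<\lambda^\ast$ and $0<\mu<\Gamma(\lambda)$, the pair $(\lambda,\mu)$ lies in $\mathcal{O}_1$, so Theorem \ref{04} furnishes a classical solution $(U,V)$ of (\ref{MS}). Because $U,V\in C^2(\Omega)\cap C(\overline{\Omega})$ satisfy $U,V<1$ in $\Omega$ and vanish on $\partial\Omega$, compactness of $\overline{\Omega}$ yields $\delta:=1-\max\{\|U\|_\infty,\|V\|_\infty\}>0$, so $(U,V)$ is a \emph{uniform} barrier for the iteration.

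Next, I would prove by induction that for every $n\geq 0$,
\[
0\leq u_n\leq u_{n+1}\leq U,\qquad 0\leq v_n\leq v_{n+1}\leq V\quad\text{in }\Omega.
\]
Positivity follows from $f,g\geq 0$ and the positivity of the Green's function. For the upper bound, assuming $v_{n-1}\leq V$, the monotonicity of $t\mapsto (1-t)^{-2}$ on $[0,1)$ gives $(1-v_{n-1})^{-2}\leq(1-V)^{-2}$, so the maximum principle applied to $U-u_n$ yields $u_n\leq U$; the monotonicity $u_n\leq u_{n+1}$ follows in the same way. The iterates therefore converge pointwise to a pair $(u,v)$ with $0\leq u\leq U$ and $0\leq v\leq V$, both bounded away from $1$ uniformly. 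The right-hand sides of (\ref{MS}) are then uniformly bounded in $L^\infty(\Omega)$ and, by the Hölder continuity of $f,g$ and the $C^{2,\alpha}$ a priori bounds along the sequence, uniformly Hölder continuous. Standard Schauder estimates upgrade the pointwise monotone convergence to convergence in $C^{2,\alpha}(\overline{\Omega})$, so $(u,v)$ is a classical solution of (\ref{MS}).

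For minimality, let $(\tilde u,\tilde v)$ be any classical solution of (\ref{MS}). By induction $u_n\leq \tilde u$ and $v_n\leq \tilde v$: the base case is $(u_0,v_0)=(0,0)\leq(\tilde u,\tilde v)$, and if $v_{n-1}\leq \tilde v$ then $(1-v_{n-1})^{-2}\leq (1-\tilde v)^{-2}$, so $-\Delta(\tilde u-u_n)\geq 0$ in $\Omega$ with zero boundary values, whence $u_n\leq \tilde u$ by the maximum principle; analogously $v_n\leq \tilde v$. Passing to the limit gives $u\leq \tilde u$ and $v\leq \tilde v$, so $(u,v)$ is minimal. Uniqueness of the minimal solution is then immediate, since any two minimal solutions would dominate one another pointwise and therefore coincide.

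The only delicate point is the limit step: one needs a uniform lower bound on $1-v_n$ and $1-u_n$ in order to avoid degeneration of the singular nonlinearities and to apply elliptic regularity. This is exactly what the uniform barrier $(U,V)$ from the first step — with $\sup U,\sup V<1$ — provides, making the whole scheme go through.
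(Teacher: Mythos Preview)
Your proposal is correct and follows essentially the same route as the paper: the paper also launches the monotone iteration from $(u_0,v_0)=(0,0)$ (their display (\ref{snow1})), takes any solution $(U,V)$ of (\ref{MS}) as an upper barrier, uses the maximum principle to prove by induction that $0<u_n\leq U<1$ and $0<v_n\leq V<1$ together with the monotonicity of $(u_n,v_n)$, and then passes to the limit. Your write-up is in fact more explicit about the uniform-away-from-$1$ barrier and the Schauder step than the paper's own proof, and your final uniqueness argument (two minimal solutions dominate each other) is the same observation the paper leaves implicit.
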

\begin{proof}
	This minimal solution is obtained as the limit of the sequence of pair of functions $(u_n,v_n)$ constructed recursively as
	follows: $(u_0,v_0)=(0,0)$ in $\Omega$ and for each $n=1,2,\ldots$, $(u_n,v_n)$ is the unique solution of the boundary value problem:
	\begin{equation}\label{snow1}
	\left\{
	\begin{alignedat}{4}
	-\Delta u_n & =  \frac{\lambda f(x)}{(1-v_{n-1})^2}  & \text{ in } & \Omega, \\
	-\Delta v_n & =  \frac{\mu g(x)}{(1-u_{n-1})^2}  & \text{ in } & \Omega, \\
	0 < &u_n, v_n <1 & \text{ in } & \Omega , \\
	u_n &= v_n  =  0  & \text{ on } &
	\partial \Omega.
	\end{alignedat}
	\right.
	\end{equation}
	Let $(U,V)$ be any solution for problem (\ref{MS}). First, it is clear that  $1 \geq U > u_0\equiv 0$ and   $1 \geq V > v_0\equiv 0$ in $\Omega$. Now, assume that $U \geq u_{n-1}$ and  $V \geq v_{n-1}$ in $\Omega$. Thus,
	\[
	\left\{
	\begin{alignedat}{4}
	-\Delta (U-u_n) & =  \lambda f(x)\left[\frac{1}{(1-V)^2}-\frac{1}{(1-u_{n-1})^2}\right] \geq 0& \text{ in } & \Omega ,\\
	-\Delta (V-v_n) & =  \mu g(x)\left[\frac{1}{(1-U)^2}-\frac{1}{(1-v_{n-1})^2}\right] \geq 0& \text{ in } & \Omega ,\\
	U-u_{n} & =  V-v_n = 0&   \text{ on } & \partial \Omega.
	\end{alignedat}
	\right.
	\]
	By the maximum principle we conclude that $1>U\geq u_n>0 \mbox{ and } 1>V\geq v_n>0 \mbox{ in } \Omega$. It is clear that this kind of argument implies that $(u_n,v_n)$ is a monotone increasing sequence. Therefore, $(u_n,v_n)$ converges uniformly to a solution $(u,v)$ of (\ref{MS}), which by construction is unique in this class of minimal solutions.
\end{proof}

\hspace{0.6cm}We can introduce for any solution $u$ of \eqref{E1}, the linearized operator at $u$ defined by $L_{u, \lambda} =-\Delta-\frac{2\lambda f(x)}{(1-u)^3}$ and its eigenvalues $\{\mu_{k,\lambda}(u); k=1,2,...\}$. The first eigenvalue is then
simple and  can be characterized variationally by
\[
\mu_{1,\lambda}(u)= \inf \left\{ \left\langle L_{u,\lambda}
\phi,\phi \right\rangle_{H_0^1(\Omega)}; \, \phi \in
C_0^\infty(\Omega),  \int_\Omega  |\phi (x)|^2dx =1\right\}.
\]
{\it Stable solutions} (resp.,  {\it semi-stable solutions}) of $(S)_\lambda$ are those solutions $u$ such that
$\mu_{1,\lambda}(u)>0$ (resp., $\mu_{1,\lambda}(u)\geq 0$). Following the ideas of M.~Crandall and P.~Rabinowitz \cite{CRARAB}, it was shown in \cite{GHOGUO} that for $1\leq N\leq 7$ and for $\lambda$ close enough to $\lambda^*$ that there exists a unique second branch of solutions for \eqref{E1} bifurcating  from $u^*$.

In the case that $(u,v)$ is a solution of (\ref{MS}) we consider the first eigenvalue $\nu_1=\nu_1((\lambda,\mu),(u,v))$ of the
linearization $\mathfrak{L}:=-\overrightarrow{\Delta}-A(x)$ around $(u,v)$ under Dirichlet boundary conditions,  where
\[
\overrightarrow{\Delta}\Phi=\left(
\begin{array}{c}
\Delta \phi_1 \\
\Delta \phi_2 \\
\end{array}
\right)
\]
and
\[ A(x):=\left(
\begin{array}{cc}
0 & a_{1 2}(x) \\
a_{2 1}(x) & 0 \\
\end{array}
\right)
=
\left(
\begin{array}{cc}
0 & \frac{2\lambda f(x)}{(1-v(x))^3} \\
\frac{2 \mu g(x)}{(1-u(x))^3} & 0 \\
\end{array}
\right)
\]
that is, the eigenvalue problem
\[
\mathfrak{L}\Phi = \nu \Phi, \;\; \Phi \in W^{1,2}_0(\Omega)\times
W^{1,2}_0(\Omega),
\]
namely, $\nu_1$ is the first eigenvalue of the problem
\begin{equation}
\left\{
\begin{alignedat}{4}
-\Delta \phi_1 - \frac{2\lambda f(x)}{(1-v)^3}\phi_2 &= \nu \phi_1  & \text{ in } & \Omega, \\
-\Delta \phi_2 - \frac{2\mu g(x)}{(1-u)^3}\phi_1 &=\nu\phi_2 & \text{ in } & \Omega, \\
\phi_1 = \phi_2  & = 0 & \text{ on } & \partial \Omega.
\end{alignedat}
\right. \tag{$E_{(\lambda, \mu)} $} \label{Eig1}
\end{equation}

We recall that in \cite[Proposition~3.1]{Sweers} was proved that there exists a unique eigenvalue $\nu_1$ with strictly positive
eigenfunction $\phi=(\phi_1,\phi_2)$ of (\ref{Eig1}), that is, $\phi_i>0$ in $\Omega$ for $i=1,2$.

\begin{remark}The first eigenvalue of the linearized single equation has a variational characterization; no such analogous formulation is available for our system.
\end{remark}

\begin{definition}[Stable and Semi-stable Solution] A solution of (\ref{MS}) is said to be stable
	(resp. semi-stable) if $\nu_1 >0 $ (resp., $\nu_1\geq 0$).
\end{definition}

\begin{proposition}
	Suppose that $(\lambda,\mu)\in\Lambda$ with $0<\mu\leq\lambda$ and we let $(u,v)$ denote the minimal solution of \eqref{MS}. Let $\phi_1,\phi_2$ as in \eqref{Eig1}. Then
	\[
	\frac{\phi_2}{\phi_1}\geq\frac{\mu}{\lambda}\quad \text{ in }\Omega.
	\]
\end{proposition}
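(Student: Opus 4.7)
My plan is to mimic, at the linearized level, the argument used in Lemma \ref{10}. Set
\[
\psi := \lambda\phi_2 - \mu\phi_1 \in H^1_0(\Omega);
\]
the claim $\phi_2/\phi_1 \geq \mu/\lambda$ is equivalent to $\psi \geq 0$ in $\Omega$. Subtracting $\mu$ times the first equation of \eqref{Eig1} from $\lambda$ times the second yields the identity
\[
-\Delta\psi \;-\; \nu_1\psi \;=\; 2\lambda\mu\!\left[\frac{g(x)}{(1-u)^3}\phi_1 \;-\; \frac{f(x)}{(1-v)^3}\phi_2\right] \quad\text{in }\Omega,
\]
with $\psi = 0$ on $\partial\Omega$.

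Next I would test this identity against $\psi_- := \max(-\psi,0) \in H^1_0(\Omega)$. Standard integration by parts (using $\nabla\psi = -\nabla\psi_-$ on $\{\psi<0\}$ and $\psi\cdot\psi_- = -\psi_-^2$) gives
\[
\int_\Omega |\nabla\psi_-|^2\,dx \;=\; \nu_1\!\int_\Omega \psi_-^2\,dx \;-\; 2\lambda\mu\!\int_\Omega\!\left[\frac{g(x)}{(1-u)^3}\phi_1 - \frac{f(x)}{(1-v)^3}\phi_2\right]\!\psi_-\,dx.
\]
On $\{\psi_->0\}$ we have $\lambda\phi_2 \leq \mu\phi_1$, hence $f\phi_2/(1-v)^3 \leq (\mu/\lambda)f\phi_1/(1-v)^3$; therefore the bracket dominates
\[
\phi_1\!\left[\frac{g(x)}{(1-u)^3} - \frac{\mu f(x)}{\lambda(1-v)^3}\right].
\]
This quantity is nonnegative: with $f=g$ (the same structural identification tacitly used inside the proof of Lemma \ref{10}), together with $v\leq u$ from Lemma \ref{10} and the hypothesis $\mu\leq\lambda$, one gets $(1-u)^{-3}\geq(1-v)^{-3}\geq (\mu/\lambda)(1-v)^{-3}$. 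Consequently the last integral is $\geq 0$, and we obtain $\int|\nabla\psi_-|^2\,dx \leq \nu_1\int\psi_-^2\,dx$.

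To close the argument I would first prove $\nu_1 < \mu_1$, where $\mu_1$ is the first Dirichlet eigenvalue of $-\Delta$ on $\Omega$. Testing the first equation of \eqref{Eig1} against the positive first Dirichlet eigenfunction $\psi_1$ yields
\[
(\mu_1-\nu_1)\!\int_\Omega \phi_1\psi_1\,dx \;=\; \int_\Omega\frac{2\lambda f(x)}{(1-v)^3}\phi_2\psi_1\,dx \;>\; 0,
\]
which gives $\nu_1<\mu_1$. Combined with Poincar\'e's inequality $\int|\nabla\psi_-|^2 \geq \mu_1\!\int\psi_-^2$, the bound above forces $\mu_1\!\int\psi_-^2 \leq \nu_1\!\int\psi_-^2$, hence $\psi_- \equiv 0$, i.e.\ $\psi\geq 0$ in $\Omega$. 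The main obstacle I anticipate is the sign bookkeeping inside the bracket: producing a nonnegative integrand on $\{\psi_->0\}$ crucially requires \emph{all three} of the ingredients $\mu\leq\lambda$, $v\leq u$ (Lemma \ref{10}), and the identification $f=g$ inherited from Section~2. Without the last, one would need an extra pointwise comparison between $\lambda g$ and $\mu f$, but within the setting in which Lemma \ref{10} is proved, the argument closes cleanly.
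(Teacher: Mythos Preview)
Your argument is correct and follows the same overarching strategy as the paper: show that the linear combination $\lambda\phi_2-\mu\phi_1$ is nonnegative by combining the eigenvalue equations with the pointwise comparison $v\le u$ from Lemma~\ref{10} and the hypothesis $\mu\le\lambda$ (and, as you rightly flag, the identification $f=g$ tacitly made already in Lemma~\ref{10}). The execution, however, differs. The paper works pointwise: it derives a differential inequality for $w=\phi_2-\tfrac{\mu}{\lambda}\phi_1$ of the form $\bigl(-\Delta-\nu_1+c(x)\bigr)w\ge 0$ with $c(x)=\tfrac{2\mu}{(1-v)^3}\ge 0$, and then invokes the maximum principle for this operator. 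You instead test against $\psi_-$ and close via Poincar\'e's inequality. Your route has the advantage of making explicit the ingredient the paper's maximum principle step silently requires, namely the spectral gap $\nu_1<\mu_1$: this is precisely what guarantees that $-\Delta-\nu_1$ (and hence $-\Delta-\nu_1+c$) satisfies the maximum principle on $H^1_0(\Omega)$, and you supply a clean proof of it by pairing the first equation of \eqref{Eig1} with the Dirichlet ground state. In short, the two proofs share the same skeleton, but yours fills in a justification the paper leaves implicit.
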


\begin{proof}
	Take the difference equation in \eqref{Eig1} and use Lemma \ref{10} to obtain
	\[
	-\Delta(\phi_2-\phi_1)-\nu(\phi_2-\phi_1)+\frac{\mu(\phi_2-\phi_1)}{(1-v)^3}\geq \frac{(\mu-\lambda)\phi_2}{(1-v)^3} \quad \text{ in }\Omega.
	\]
	Now, define a elliptic operator $L:=-\Delta -\nu$. We have that
	\[
	\begin{alignedat}{2}
	L\left( \psi_2-\psi_1 +\frac{\lambda-\mu}{\lambda}\psi_1 \right)&+\frac{\mu}{(1-v)^3}\left( \psi_2-\psi_1+\frac{\lambda-\mu}{\lambda} \right)\\
	& \geq L\left( \psi_2-\psi_1 +\frac{\lambda-\mu}{\lambda}\psi_1 \right)+\frac{\mu}{(1-v)^3}\left( \psi_2-\psi_1 \right)\\
	& \geq \frac{(\mu-\lambda)\phi_2}{(1-v)^3}+\frac{\lambda-\mu}{\lambda}L(\phi_1)=0
	\end{alignedat} 
	\]
	Using the maximum principle, we have $\phi_2-\phi_1+(\lambda-\mu)\phi_1/\lambda\geq 0\text{ in }\Omega$. Re-arranging the above equation follows $\phi_2/\phi_1\geq\mu/\lambda$ and this finish the proof.
\end{proof}

\subsection{Estimates for minimal solutions}
\hspace{0.6cm}The next result is crucial in our argument to obtain the regularity of semistable solutions of \eqref{MS}. For the proof we refer the reader to \cite[Lemma~3]{DUPFARSIR2013}.

\begin{lemma}\label{03}
	Let $N\geq 1$ and let $(u,v)\in C^2\left(\overline{\Omega}\right)\times C^2\left(\overline{\Omega}\right)$ denote a stable solution of
	\[
	\left\{
	\begin{alignedat}{4}
	-\Delta u  &= g(v)  & \text{ in } & \Omega, \\
	-\Delta v &= f(u) & \text{ in } & \Omega, \\
	u = v  & = 0 & \text{ on } & \partial \Omega,
	\end{alignedat}
	\right.
	\]
	where $f$ and $g$ denote two nondecreasing $C^1$ functions. Then for all $\varphi\in C_{c}^{1}(\Omega)$ there holds
	\[
	\int_{\Omega}\sqrt{f^\prime(u)g^\prime(v)}\varphi^2\,\mathrm{d}x\leq \int_{\Omega}|\nabla \varphi|^2\,\mathrm{d}x.
	\]
\end{lemma}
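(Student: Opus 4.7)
\medskip

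\noindent\textbf{Proof proposal.} The plan is to derive the stability integral inequality by a Picone-type argument applied separately to each linearized equation, and then couple the two resulting estimates via the AM-GM inequality.

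Since $(u,v)$ is a stable solution, the principal eigenvalue $\nu_1 \geq 0$ of the linearization \eqref{Eig1} admits a componentwise positive eigenfunction $(\phi_1,\phi_2)$ (existence guaranteed by \cite[Proposition~3.1]{Sweers}), so that
\[
-\Delta \phi_1 \geq g'(v)\phi_2 \qquad \text{and} \qquad -\Delta \phi_2 \geq f'(u)\phi_1 \quad \text{in } \Omega.
\]
This is the only consequence of stability I would use; note that the monotonicity hypothesis gives $f',g' \geq 0$, which will be needed below.

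Given $\varphi \in C_c^1(\Omega)$, I would then multiply each of the two inequalities by the nonnegative test function $\varphi^2/\phi_i$ and integrate by parts on $\operatorname{supp}(\varphi) \Subset \Omega$. Expanding $\nabla(\varphi^2/\phi_i)$ and completing the square $\lvert \nabla \varphi - (\varphi/\phi_i)\nabla \phi_i\rvert^2 \geq 0$ produces the classical Picone bound $\nabla\phi_i \cdot \nabla(\varphi^2/\phi_i) \leq \lvert\nabla\varphi\rvert^2$, whence
\[
\int_\Omega g'(v)\frac{\phi_2}{\phi_1}\varphi^2\,dx \leq \int_\Omega \lvert\nabla\varphi\rvert^2\,dx \qquad \text{and} \qquad \int_\Omega f'(u)\frac{\phi_1}{\phi_2}\varphi^2\,dx \leq \int_\Omega \lvert\nabla\varphi\rvert^2\,dx.
\]
Finally, the pointwise AM-GM identity
\[
\sqrt{f'(u)g'(v)} \;=\; \sqrt{\Bigl(f'(u)\tfrac{\phi_1}{\phi_2}\Bigr)\Bigl(g'(v)\tfrac{\phi_2}{\phi_1}\Bigr)} \;\leq\; \tfrac{1}{2}\Bigl(f'(u)\tfrac{\phi_1}{\phi_2} + g'(v)\tfrac{\phi_2}{\phi_1}\Bigr)
\]
combines the two estimates to yield the desired inequality.

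The main obstacle I anticipate is the division by $\phi_i$, since these eigenfunctions vanish on $\partial\Omega$. Because $\varphi$ has compact support in $\Omega$, Hopf's lemma gives $\phi_i \geq c > 0$ on $\operatorname{supp}(\varphi)$, so every manipulation above is legitimate on the relevant set; alternatively, one may regularize by replacing $\phi_i$ with $\phi_i + \varepsilon$, repeat the Picone step with uniform bounds, and pass to the limit as $\varepsilon \to 0^+$. The AM-GM step itself is unconditional once $f',g' \geq 0$ is in hand, so no further difficulty is expected.
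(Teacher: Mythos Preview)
Your argument is correct: the existence of a componentwise positive principal eigenfunction, the two Picone estimates, and their combination via AM--GM are all valid, and the compact support of $\varphi$ together with the strict positivity of $\phi_i$ in the interior handles the division issue without difficulty. Note, however, that the paper does not supply its own proof of this lemma---it simply refers the reader to \cite[Lemma~3]{DUPFARSIR2013}---and the Picone-plus-AM--GM route through the positive eigenfunction pair is precisely the argument given there, so you have effectively reconstructed the cited proof.
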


\hspace{0.6cm}Now we follow the approach due to L.~Dupaigne, A.~Farina and B.~Sirakov \cite{DUPFARSIR2013} adapted to MEMS case. The main idea is apply H\"older's inequality and iterate both equations in System \eqref{MS}. This method is the key to obtain the optimal dimension for the regularity of extremal solutions.

\begin{proof}[Proof of Theorem \ref{08}]
	Let $\alpha>1$, multiply the equation $-\Delta u=\lambda/(1-v)^2$ by $(1-u)^{-\alpha}-1$ and integrating by parts we have
	\[
	\begin{alignedat}{2}
	\lambda\int_{\Omega}(1-v)^{-2}[(1-u)^{-\alpha}-1]\,\mathrm{d}x& = \alpha\int_{\Omega}(1-u)^{-\alpha-1}|\nabla u|^2\,\mathrm{d}x\\
	& =\frac{4\alpha}{(\alpha-1)^2}\int_{\Omega}\left\vert \nabla \left( (1-u)^{-\frac{\alpha}{2}+\frac{1}{2}}  \right) \right\vert^2\,\mathrm{d}x.
	\end{alignedat}
	\]
	We can test $(1-u)^{-\alpha/2+1/2}-1$ in Lemma \ref{03} to obtain
	\[
	2\sqrt{\lambda\mu}\int_{\Omega}(1-u)^{-\frac{3}{2}}(1-v)^{-\frac{3}{2}}[(1-u)^{-\frac{\alpha}{2}+\frac{1}{2}}-1]^2\,\mathrm{d}x\leq\int_{\Omega}\left\vert \nabla\left( (1-u)^{-\frac{\alpha}{2}+\frac{1}{2}} \right) \right\vert^2\,\mathrm{d}x.
	\]
	Combining these two previous inequalities and developing the square follows
	\begin{equation}\label{26}
	\begin{alignedat}{2}
	\sqrt{\lambda\mu}\int_{\Omega}(1-u)^{\frac{-2\alpha-1}{2}}(1-v)^{-\frac{3}{2}}& \leq \frac{\lambda\left(\alpha-1\right)^2}{8\alpha}\int_{\Omega}(1-u)^{-\alpha}(1-v)^{-2}\\
	&+2\sqrt{\lambda\mu}\int_{\Omega}(1-u)^{\frac{-\alpha-1}{2}}(1-v)^{-\frac{3}{2}}
	\end{alignedat}
	\end{equation}
	Denote
	\[
	X=\int_{\Omega}(1-u)^{\frac{-2\alpha-1}{2}}(1-v)^{-\frac{3}{2}}
	\quad \mbox{and} \quad 
	Y=\int_{\Omega}(1-u)^{-\alpha-1}(1-v)^\frac{-\alpha-3}{2}.
	\]
	Now we need estimate the terms on the right-hand side. Take $p=\alpha/(\alpha-1)$ and $q=\alpha$ and using H\"older inequality with this exponents we obtain
	\begin{equation}\label{25}
	\int_{\Omega}(1-u)^{-\alpha}(1-v)^{-2}\leq X^\frac{\alpha-1}{\alpha}Y^\frac{1}{\alpha}.
	\end{equation}
	Given $\epsilon>0$, we can use Young's inequality and Lemma \ref{10} to obtain
	\begin{equation}\label{24}
	\int_{\Omega}(1-u)^\frac{-\alpha-1}{2}(1-v)^{-\frac{3}{2}}\leq \frac{\epsilon}{2}\frac{\sqrt{\lambda}}{\sqrt{\mu}} \int_{\Omega}(1-u)^{-\alpha}(1-v)^{-2} + \frac{\sqrt{\mu}}{\sqrt{\lambda}}\frac{|\Omega|}{2\epsilon}.
	\end{equation}
	Thus, by \eqref{26},\eqref{25} and \eqref{24} we obtain
	\[
	\sqrt{\lambda\mu}X\leq \lambda\left[\frac{\left(\alpha-1\right)^2}{8\alpha}+\epsilon\right]X^\frac{\alpha-1}{\alpha}Y^\frac{1}{\alpha} + \frac{|\Omega|}{2\epsilon}.
	\]
	By symmetry, we also have
	\[
	\sqrt{\lambda\mu}Y\leq \mu\left[\frac{\left(\alpha-1\right)^2}{8\alpha}+\epsilon\right]Y^\frac{\alpha-1}{\alpha}X^\frac{1}{\alpha} + \frac{|\Omega|}{2\epsilon}.
	\]
	Multiplying this equations we have
	\[
	\begin{alignedat}{2}
	\left[ 1-\left(\frac{\left(\alpha-1\right)^2}{8\alpha}+\epsilon\right)^2 \right]XY & \leq \left[\frac{\left(\alpha-1\right)^2}{8\alpha}+\epsilon\right]\frac{|\Omega|}{2\epsilon}\left[X^{\frac{\alpha-1}{\alpha}}Y^\frac{1}{\alpha} + X^\frac{1}{\alpha}Y^\frac{\alpha-1}{\alpha}\right]\\
	& + \frac{|\Omega|^2}{4\epsilon^2}.
	\end{alignedat}
	\]
	Choose $\epsilon=1/16$ and thus we can verify that for every $1<\alpha<9,62$, either $X$ or $Y$ must be bounded. We can suppose $\lambda\leq\mu$ and by Lemma \ref{10} we have $u\leq v$. Thus follows that $(1-u)^{-3}$ must be bounded, either in $L^p$ for $p<(\alpha+2)/3$ or in $L^q$ for $q<\alpha +5/3$. We note that the second case does not occur, because otherwise the semistable solutions should be regular for dimension $N\leq 22$, but we already known that, in the scalar case, $u^*(x)=1-|x|^{2/3}$ is a singular solution when $\Omega$ is the unit ball and $N\geq 8$. Therefore, the first case must occur and consequently $u^*$ is smooth for $N\leq 7$.
\end{proof}

\begin{remark} Using a result due to W.~Troy \cite[Theorem~1]{troy}, we can see that any smooth solution of \eqref{MS} is radially symmetric and decreasing when $\Omega$ is a ball of $\mathbb{R}^N$.
\end{remark}

\end{document}